\newtheorem{thm}{Theorem}
\newtheorem{prop}[thm]{Proposition}
\newtheorem{defn}[thm]{Definition}
\newtheorem{cor}[thm]{Corollary}
\newtheorem{lem}[thm]{Lemma}
\def\emm#1,{{\em #1}}
\newcommand{\zs}{\mathbb{Z}}
\newcommand{\qs}{\mathbb{Q}}
\newcommand{\beq}{\begin{equation}}
\newcommand{\eeq}{\end{equation}}
\newcommand{\gf}{generating function}
\newcommand{\bx}{1/x}
\newcommand{\by}{1/y}
\newcommand{\cS}{\mathcal S}
\newcommand{\om}{\omega}
\author{O. Bernardi\addressmark{1},
M. Bousquet-M\'elou\addressmark{2}
\and K. Raschel\addressmark{3}
}
\title[Counting  quadrant walks via Tutte's invariant method]
{Counting  quadrant walks via Tutte's invariant method (extended abstract)}
\address{\addressmark{1} Brandeis University, Department of Mathematics, 415 South Street, Waltham, MA 02453, US\\
  \addressmark{2} CNRS, LaBRI, Universit\'e de Bordeaux, 351 cours de la Lib\'eration,  33405 Talence Cedex, France\\
  \addressmark{3}
CNRS, LMPT, Universit\'e de Tours, Parc de Grandmont, 37200 Tours, France}
\keywords{Lattice walks, enumeration, differentially algebraic functions, conformal mappings}
\begin{document}
\maketitle
\begin{abstract}
\paragraph{Abstract.} 
In the 1970s, Tutte developed a clever algebraic
approach, based on certain ``invariants'', to solve a functional
equation that arises in the enumeration of properly colored
 triangulations.  The enumeration of plane lattice walks
confined to the first quadrant is governed by similar equations, and
has led in the past decade to a rich collection of attractive results
dealing with the nature (algebraic, D-finite or not) of the associated
generating function, depending on the set of allowed steps.

We first adapt Tutte's approach to prove (or reprove) the algebraicity
of all quadrant models known or conjectured to be
algebraic (with one small exception). This includes Gessel's famous
model, and the first proof ever found for one  model with weighted
steps. To be applicable, the method requires the existence of two rational functions called   \emph{invariant} and  \emph{decoupling function} respectively. When they exist, algebraicity comes
out (almost) automatically.

Then, we move to an analytic viewpoint which has already proved very
powerful, leading in particular to integral expressions of the
generating function in the non-D-finite cases, as well as to proofs of
non-D-finiteness. We develop  in this
context a weaker notion of  invariant. Now all quadrant models have invariants, and for those that
have in addition a decoupling function, we obtain integral-free
expressions of the generating function, and a proof that this series is differentially
algebraic (that is, satisfies a non-linear differential equation).
This analytic approach solves as well the algebraic model left unsolved in the first
part.

\paragraph{R\'esum\'e.} Nous adaptons à l'énumération des chemins
confinés dans le
premier quadrant une méthode développée par Tutte dans les années 70 pour
compter les triangulations proprement colorées.

Nous prouvons ou reprouvons d'abord ainsi l'algébricité de tous les
chemins du quadrant dont la série génératrice est  (ou est conjecturée)
algébrique ; ceci inclut le célèbre modèle de Gessel. Pour \^etre
applicable, la méthode requiert l'existence de deux fonctions
rationnelles appelées \emph{invariant} et \emph{fonction de découplage}.

Nous passons
ensuite à un cadre analytique qui a déjà fourni des
expressions intégrales des séries génératrices dans des cas non
D-finis, et des preuves de non-D-finitude. Dans ce
contexte, nous définissons une notion plus faible d'invariants. 
Tous les modèles du quadrant admettent alors un invariant, et, pour ceux qui ont de plus
une fonction de découplage, nous obtenons une expression sans
intégrale de la série génératrice, et prouvons que cette série est
différentiellement algébrique (solution d'une équation différentielle polynomiale).
\end{abstract}

\section{Introduction}
In the past decade, the enumeration of plane walks confined to the
first quadrant has received a lot of attention, and given rise to many
interesting methods and results. Given a set of steps $\cS \subset
\zs^2$ and a starting
point (usually $(0,0)$), the main question is to determine the
\gf
\begin{equation*}
     Q(x,y;t) \equiv Q(x,y)=\sum_{i,j,n\geq 0} q(i,j;n) x^iy^jt^n,
\end{equation*}
where $q(i,j;n)$ is the number of $n$-step quadrant walks from $(0,0)$
to $(i,j)$, taking their steps in $\cS$. If one only considers walks with \emm small
steps, (that is, $\cS\subset \{-1,0,1\}^2$), there are 79 inherently
different step sets (also called \emm models), and an expression of
$Q(x,y)$, sometimes rather complex, is known in each case. Moreover,
the \emm nature, of this series is also known: it is \emm D-finite, (that
is, satisfies three linear differential equations, one in $x$, one in
$y$, one in $t$, with polynomial coefficients) if and only if a
certain group of rational transformations is finite. This happens in
23 cases. In exactly 4 of them, $Q(x,y)$ is even \emm algebraic,, 
that is, satisfies a polynomial equation with polynomial coefficients in $x$,
$y$ and $t$. 
This classification has been obtained by  an attractive combination of
approaches:  algebraic~\cite{bousquet-versailles,BMM-10,gessel-proba,gessel-zeilberger},
computer-algebraic~\cite{BoKa-10,KaKoZe08,kauers07v},
analytic~\cite{BKR-13,Ra-12,KR-12},
asymptotic~\cite{denisov-wachtel,MiRe09}. 

The starting point of all of them is a functional equation
satisfied by $Q(x,y)$, which is simple to establish, but often hard
to solve. For instance, in the case of Kreweras' walks (steps
$\nearrow$, $\leftarrow$, $\downarrow$), it
reads
\beq\label{eq:Kreweras}
Q(x,y)=1+ t xy Q(x,y) + t\, \frac{Q(x,y)-Q(0,y)}x +  t \, \frac{Q(x,y)-Q(x,0)}{y}.
\eeq
This is reminiscent of an equation
written by Tutte in the 1970s when studying 
 $q$-colored triangulations:
\beq\label{eq:Tutte}
G(x,y)=xq(q-1)t^2+\frac{xy}{qt}G(1,y)G(x,y)-x^2yt\frac{G(x,y)-G(1,y)}{x-1}+x\frac{G(x,y)-G(x,0)}{y}.
\eeq
Due to the quadratic term, \eqref{eq:Tutte}
 is in fact more complicated
than~\eqref{eq:Kreweras}. Tutte worked about a decade on this
equation, and finally solved it, proving that the series $G(1,0)$ is
\emm differentially algebraic,, that is, satisfies a (non-linear)
differential equation in $t$. One key step in his study was to prove
that for certain (infinitely many) values of $q$, the series $G(x,y)$
is algebraic, using a certain 
notion of \emm invariant,~\cite{tutte-chromatic-revisited}.

\medskip
Could this notion bring something new to the classification of
quadrant walks? This paper answers this question positively:
\begin{itemize}
\item We first adapt Tutte's approach to quadrant walks, and thus
  obtain \emm short and uniform proofs, of algebraicity for all
  algebraic models (with one
  small exception). This includes the shortest proof ever found for
  Gessel's famous model, and extends to models with weighted steps,
  for which algebraicity was sometimes still conjectural~\cite{KaYa-15}. With our
  approach, a model with finite group is algebraic if and only if it
  admits a \emm decoupling function, (Sections~\ref{sec:gessel} and~\ref{sec:extensions}).
\item We then  define a weaker notion of invariant, and use it to
  give an integral free expression of $Q(x,y)$ for models with
  infinite group that admit a decoupling function
  (Section~\ref{sec:analysis};
see e.g.~\eqref{eq:Q(x,0)_Kreweras_model_6}).
 We  have at the   moment found 9 such models. This expression implies
  that $Q(x,y)$ is differentially algebraic in $x$, $y$ and $t$
  (Section~\ref{sec:DA}). 
\end{itemize}

We now introduce some basic tools  in the study of quadrant walks with
small steps.
 A simple step-by-step
construction of these  walks gives the following functional
equation~\cite{BMM-10}:  
\begin{equation}
\label{eq:functional_equation}
     K(x,y) Q(x,y) = K(x,0) Q(x,0) + K(0,y) Q(0,y)-K(0,0) Q(0,0)- xy,
\end{equation}
where 
$$
     K(x,y) = xy\Bigg(t\sum_{(i,j)\in \mathcal S} x^iy^j-1\Bigg)
$$
is  the \emm kernel,  of the model.
It is a  polynomial of degree 2 in $x$ and
$y$,
which  we often write  as
\beq\label{K-abc}
     K(x,y)  =\widetilde a(y)x^2+\widetilde
b(y)x+\widetilde c(y)
= a(x)y^2+ b(x)y+ c(x) .
\eeq
We shall also denote  
$$
K(x,0)Q(x,0)=R(x) \qquad \hbox{and} \qquad K(0,y) Q(0,y)=S(y) .
$$
Note that  $K(0,0) Q(0,0)=R(0)=S(0)$, so that the basic functional
equation~\eqref{eq:functional_equation} reads
\beq\label{eqfunc}
K(x,y)Q(x,y)=R(x)+S(y)-R(0)-xy.
\eeq

 Seen as  polynomial in $y$, the
kernel  has two roots $Y_0$ and $Y_1$, which are
Laurent series in $t$ with coefficients in $\qs(x)$. 
%
If the series $Q(x,Y_i)$ is well defined, 
setting $y=Y_i$ in~\eqref{eqfunc} shows that
\begin{equation}
\label{eq:func_spec}
     R(x)+S(Y_i)= xY_i+R(0).
\end{equation}
If this holds for $Y_0$ and $Y_1$, 
we have
\beq\label{SYi}
S(Y_0)-xY_0=S(Y_1)-xY_1.
\eeq

The \emm group of the model,, denoted by  $G(\cS)$, is generated by the following two rational
transformations:
$$
  \Phi(x,y) = \left(\frac{\widetilde c(y)}{\widetilde
      a(y)}\frac{1}{x},y\right) \qquad \hbox{and } \qquad    \Psi(x,y) = \left(x,\frac{c(x)}{a(x)}\frac{1}{y}\right)     .
$$
Both are involutions, thus $G(\cS)$ is  a dihedral group, which,
depending on the step set $\cS$,  is finite or not.

A step set $\cS$ is \emm singular, if
each step $(i,j)\in \cS$ satisfies $i+j \ge 0$.

\medskip
\noindent {\bf Notation.} 
For a ring $R$, we denote by $R[t]$
 (resp.~$R[[t]]$) 
the ring of
polynomials (resp.\ formal power series) 
in $t$ with coefficients in
$R$. If $R$ is a field, then $R(t)$ stands for the field of rational functions
in $t$. 
This notation is generalized to several variables.
For instance, the series $Q(x,y)$ 
belongs to $\qs[x,y][[t]]$.

\section{A new solution of Gessel's model}
\label{sec:gessel}
This model, with steps $\rightarrow, \nearrow, \leftarrow, \swarrow$, appears as the most difficult  model
with a finite group. Around 2000, Ira Gessel conjectured that
the number of $2n$-step quadrant walks  starting and ending at $(0,0)$ was
$$
q(0,0;2n)=16^n\, \frac{ (1/2)_n(5/6)_n}{(2)_n(5/3)_n},
$$
where $(a)_n=a(a+1) \cdots (a+n-1)$ is the 
rising  factorial.
This 
conjecture was  proved in 2009 by Kauers, Koutschan and
Zeilberger~\cite{KaKoZe08}. A year later, Bostan and
Kauers~\cite{BoKa-10} proved that the 
three-variate series $Q(x,y;t)$ 
is not only D-finite, but even algebraic.
Two other proofs of these
results have  been
given~\cite{BKR-13,mbm-gessel}.
Here, we give yet another proof based on Tutte's idea of \emm
 invariants,.

The basic functional equation~\eqref{eqfunc} holds with $K(x,y)=t (y +x^2y
 +x^2y^2+1)-xy$, 
$R(x)=tQ(x,0)$, 
and $S(y)=t(1+y)Q(0,y)$. It follows from  $K(x,Y_0)=K(x,Y_1)=0$ that
\beq\label{inv-I-G}
I(Y_0)=I(Y_1),  \qquad \hbox{ with } \qquad I(y)= \frac 1{t(1+y)(1+\by)}
+t(1+y)(1+\by).
\eeq
We say that $I(y)$ is a (rational) \emm $y$-invariant.,

Let us now take $x=t+t^2(u+1/u)$, where $u$ is a new variable. Then it is
easy to see that both $Y_0$ and $Y_1$ are  Laurent series in $t$
with coefficients in $\qs(u)$, 
and that  $Q(x,Y_0)$ and $Q(x,Y_1)$
are well defined. Hence~\eqref{SYi} holds. Moreover, the kernel 
equation $K(x,Y_i)=0$ implies that
\beq\label{decouple-gessel}
x(Y_0-Y_1)= \frac 1{t(1+Y_1)}- \frac 1{t(1+Y_0)},
\eeq
so that we can rewrite~\eqref{SYi} as
$$
J(Y_0)=J(Y_1), \qquad
\hbox{with } \qquad J(y)=S(y)+\frac{1}{t(1+y)}.
$$
This should be compared to~\eqref{inv-I-G}.
The connection between $I(y)$ and $J(y)$ will stem from the following
lemma, the proof of which is elementary.
\begin{lem}\label{lem:inv-gessel}
  Let $F(y)$
be a Laurent series in $t$ with coefficients in $\qs[y]$, of the form 
$$
F(y)=\sum_{0\le j\le n+n_0} a(j,n) y^jt^{2n} 
$$
for some $n_0\ge 0$.
Then for 
$x=t+t^2(u+1/u)$,  the series $F(Y_0)$
and $F(Y_1)$ are 
well defined Laurent series in $t$, with coefficients in
$\qs(u)$. 
If they coincide, then $F(y)$ is
in fact independent of $y$. 
\end{lem}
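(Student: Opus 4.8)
The statement has two parts: first, that for $x=t+t^2(u+1/u)$ the substitutions $F(Y_0)$ and $F(Y_1)$ make sense as Laurent series in $t$ over $\qs(u)$; second, that if these two series are equal then $F$ is constant. For the first part, I would examine the roots $Y_0,Y_1$ of $K(x,y)=a(x)y^2+b(x)y+c(x)$ seen as a polynomial in $y$. With the given parametrization of $x$ one checks that $a(x),b(x),c(x)$ are Laurent series in $t$ with coefficients in $\qs[u,1/u]$, and that exactly one root, say $Y_0$, is a power series in $t$ with zero constant term (the ``small'' root, the one relevant to the combinatorics), while the other root $Y_1 \sim \text{(something)}/t$ is a genuine Laurent series. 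Plugging either root into $F(y)=\sum_{j}a(j,n)y^jt^{2n}$ is legitimate because the constraint $j\le n+n_0$ guarantees that for each power of $t$ only finitely many terms of the sum contribute — this is exactly why the bound $j \le n+n_0$ is in the hypothesis. So $F(Y_0),F(Y_1)\in\qs(u)[[t]][1/t]$, well defined.

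For the second part, the key is that the two roots are related by a simple symmetry. The plan is to note that $x=t+t^2(u+1/u)$ is invariant under $u\mapsto 1/u$, so $\{Y_0,Y_1\}$ as an unordered pair is stable under $u\mapsto 1/u$; combined with $Y_0Y_1=c(x)/a(x)$ (a fixed Laurent series in $t$ over $\qs(u)$, in fact symmetric in $u\leftrightarrow 1/u$), one identifies precisely how $Y_1$ is obtained from $Y_0$ — essentially $Y_1(u) = Y_0(1/u)$ up to the product relation, or more usefully, the map sending one root to the other is conjugate under $u\mapsto 1/u$. Then the hypothesis $F(Y_0)=F(Y_1)$ says that a certain Laurent series in $t$ over $\qs(u)$ is invariant under $u\mapsto 1/u$. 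Writing $F(Y_0)=\sum_n f_n(u)t^{n}$ (finitely many negative powers), each $f_n(u)\in\qs(u)$ must satisfy $f_n(u)=f_n(1/u)$.

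The heart of the argument is then an induction on the $t$-adic valuation, peeling off coefficients of powers of $t$ one at a time. At the lowest relevant order, the leading behavior of $Y_0$ and $Y_1$ forces $a(0,n)$-type data; comparing the coefficient of $y^{\mathrm{top}}$ through the dominant term of $Y_1$ (which carries the $1/t$ pole) isolates the top coefficient of $F$ in $y$, and symmetry under $u\mapsto 1/u$, together with the fact that a nonzero polynomial in $u+1/u$ of positive degree cannot vanish, forces that coefficient to be zero unless $F$ has $y$-degree $0$ at that level. Subtracting off the constant term and dividing by the appropriate power of $t$, one reduces to a series of the same shape with smaller top degree, and iterates. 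The main obstacle — and the only place real care is needed — is bookkeeping the valuations: one must check that at each stage the ``reduced'' series $\bigl(F(y)-(\text{current constant})\bigr)$, after dividing out the forced power of $t$, still has the form $\sum_{j\le n+n_0'}a'(j,n)y^jt^{2n}$ required to reapply the argument, i.e.\ that the degree bound is preserved under the reduction. Once that is verified, the induction terminates and shows $F(y)$ depends only on $t$ (and $u$), hence is independent of $y$. The calculations involved (expanding $Y_0,Y_1$, extracting leading coefficients) are routine, so I would only sketch them.
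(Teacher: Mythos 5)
The extended abstract only asserts that the proof is ``elementary'' and does not print it, so I compare your outline with what the argument actually has to be. Your plan rests on a factual error about the roots. For Gessel's kernel $K(x,y)=tx^2y^2+(t+tx^2-x)y+t$, the dichotomy ``one small root $Y_0=O(t)$, one large root $Y_1\sim 1/(tx)$'' is correct for a \emph{generic} $x$, but it breaks down under the very substitution $x=t+t^2(u+1/u)$, which makes $x$ itself of order $t$. Indeed $Y_0Y_1=1/x^2\sim 1/t^2$ and $Y_0+Y_1\sim (u+1/u)/t$, whence
$$
Y_0=\frac{u}{t}\,\bigl(1+O(t)\bigr),\qquad Y_1=\frac{1}{ut}\,\bigl(1+O(t)\bigr),
$$
so \emph{both} roots have a simple pole at $t=0$ and are exchanged by $u\mapsto 1/u$. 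That is the whole point of this substitution, and it is incompatible with your description: a root that is $O(t)$ cannot be mapped to a root of order $1/t$ by $u\mapsto 1/u$, which preserves $t$-valuations, so your later claim $Y_1(u)=Y_0(1/u)$ contradicts your earlier claim that exactly one root is a power series. The hypothesis $j\le n+n_0$ is needed precisely because both substitutions produce poles: $Y_i^jt^{2n}$ has valuation $2n-j\ge n-n_0$, which is what makes both $F(Y_0)$ and $F(Y_1)$ well-defined Laurent series.

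The second gap is in the core step. What is needed is a one-shot extremal argument, not the iterative ``subtract the constant term, divide by $t$, lower the top degree'' scheme whose bookkeeping you admit you have not carried out (and whose termination is unclear, since the $y$-degree bound $n+n_0$ grows with $n$). Concretely: if some $a(j,n)\ne0$ with $j\ge1$, let $m_0=\min\{2n-j\}$ over such pairs. Since $Y_0^j-Y_1^j=(u^j-u^{-j})\,t^{-j}(1+O(t))$, the coefficient of $t^{m_0}$ in $F(Y_0)-F(Y_1)$ equals $\sum_{n}a(2n-m_0,n)\,(u^{2n-m_0}-u^{-(2n-m_0)})$, where the exponents $2n-m_0$ are pairwise distinct; the Laurent polynomials $u^k-u^{-k}$, $k\ge1$, have disjoint supports and are therefore linearly independent over $\qs$, so all these coefficients vanish, contradicting the choice of $m_0$. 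Note that the relevant independence concerns these \emph{antisymmetric} combinations, not ``polynomials in $u+1/u$'' as you invoke, and that no induction on $F$ is required. With the corrected description of $Y_0,Y_1$ and this independence step your plan becomes the standard proof; as written it does not go through.
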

The above series $I$ and $J$ do not satisfy the assumptions of the
lemma, as  their coefficients are \emm rational, in $y$ with
poles at $y=0, -1$ (for $I$) and $y=-1$ (for $J$). Still, we can
construct from them a series~$F$ 
satisfying the assumptions of the lemma. First, we eliminate the pole of $I$ at 0 by
forming the $y$-invariant $(J(y)-J(0))I(y)$.  The coefficients
of this series have a pole of order at most 3 at $y=-1$. By
subtracting an appropriate series of the form $
aJ(y)^3+bJ(y)^2+cJ(y)$, we obtain a series satisfying the assumptions
of the lemma, which must thus be constant, equal for instance at its
value at $y=-1$. In brief,
\beq\label{eq:IJ}
(J(y)-J(0))I(y)= aJ(y)^3+bJ(y)^2+cJ(y)+d
\eeq
for some series $a$, $b$, $c$, $d$ in $t$.
Expanding this identity near $y=-1$ gives:
$$
 a=-t, \qquad b=2+tS(0), \qquad c=-S(0)+ 2S'(-1)-1/t, 
$$
and $$d=-2S(0)S'(-1)-3S'(-1)/t+S''(-1)/t.$$ 
Replacing in~\eqref{eq:IJ} the series $I$ and $J$ by their expressions (in terms of $t$, $y$ and
$S(y)$) gives  for $S(y)$ a cubic equation, involving three additional unknown series in $t$, namely $S(0)$,
$S'(-1)$ and $S''(-1)$. It is not hard to see that this equation
defines a unique power series $S(y)$ in $\qs[y][[t]]$. In the
terminology of~\cite{mbm-jehanne}, this is a \emm cubic equation  in one catalytic variable,
$y$. The solutions of such equations are always algebraic, and a
procedure for solving them is given in~\cite{mbm-jehanne}. Applying it
shows in particular 
that $S(0)/t$, $S'(-1)/t$ and $S''(-1)/t$ belong to  $\qs(Z)$, where
$Z$ is the  unique series in $t$ with constant term 1
satisfying $Z^2=1+256t^2Z ^6/(Z^2+3)^3$. Due to lack
of space, we do not give any details, but refer the reader
to~\cite[Sec.~3.4]{mbm-gessel}, where an analogous
 equation satisfied by $R(x)$ is solved.

\section{Extensions and obstructions}
\label{sec:extensions}
We now formalize the three main  ingredients in the above solution of Gessel's
model. The first one is clearly the rational invariant $I(y)$ given
by~\eqref{inv-I-G}.
\begin{defn}\label{def:rat-inv}
  A rational function $I(y) \in \qs(t,y)
\setminus \qs(t)$ 
is said to be a
  \emm $y$-invariant, of a quadrant model $\cS$  if  
$
I(Y_0) = I(Y_1)
$
when $Y_0$ and $Y_1$ are the roots of the kernel, solved for $y$. We
define $x$-invariants similarly. 
\end{defn}
Note that $I(Y_i)= (I(Y_0)+I(Y_1))/2$ must then be a
rational function of $x$, since it is a symmetric function of $Y_0$
and $Y_1$.
\begin{lem}\label{lem:xy-inv}
  If a model has a
  $y$-invariant $I_2(y)$, then it admits $
I_1(x):= I_2(Y_0)= I_2(Y_1)$ as  $x$-invariant. Moreover, 
$ I_1(X_0)=I_1(X_1)=I_2(y).
$
\end{lem}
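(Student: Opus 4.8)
\noindent\emph{Approach.}
I would carry out the proof inside a single ring in which all of $Y_0,Y_1$ (Laurent series in $t$ over $\qs(x)$) and $X_0,X_1$ (Laurent series in $t$ over $\qs(y)$) live simultaneously --- for instance the function field of the kernel curve $K(x,y)=0$ over $\overline{\qs(t)}$ --- so that ``setting $x=X_i$'' or ``$y=Y_i$'' means nothing more than evaluating a rational function at a point of that curve. The first step is to record that $I_1(x):=I_2(Y_0)=I_2(Y_1)$ is indeed a rational function of $x$ with coefficients in $\qs(t)$: this is exactly the remark preceding the lemma. Namely $I_2(Y_0)+I_2(Y_1)$ is a symmetric rational expression in $Y_0$ and $Y_1$, hence a rational function of $Y_0+Y_1$ and $Y_0Y_1$, which by~\eqref{K-abc} equal $-b(x)/a(x)$ and $c(x)/a(x)$ and thus belong to $\qs(t,x)$; since $I_2(Y_0)=I_2(Y_1)$ by hypothesis, $I_1(x)=\tfrac12(I_2(Y_0)+I_2(Y_1))\in\qs(t,x)$. (Equivalently, $I_2(Y_0)$ is fixed by the nontrivial automorphism of $\qs(t,x)(Y_0)$ over $\qs(t,x)$.)

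The core of the argument is the ``moreover'' identity, which I would prove first and from which everything else follows. Fix $i\in\{0,1\}$ and substitute $x=X_i$ into the identity $I_1(x)=I_2(Y_0(x))=I_2(Y_1(x))$. By definition $X_i$ is a root of the kernel solved for $x$, so $K(X_i,y)=0$; reading $K(X_i,\,\cdot\,)$ as a quadratic whose two roots are, by the very definition of $Y_0$ and $Y_1$, the series $Y_0(X_i)$ and $Y_1(X_i)$, we conclude that $y$ coincides with one of $Y_0(X_i)$ and $Y_1(X_i)$. Since $I_1(X_i)$ equals $I_2$ evaluated at \emph{either} of those two roots, evaluating at whichever one is equal to $y$ gives $I_1(X_i)=I_2(y)$. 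Doing this for $i=0$ and $i=1$ yields $I_1(X_0)=I_1(X_1)=I_2(y)$, which is the asserted identity. In particular $I_1(X_0)=I_1(X_1)$, and $I_1\notin\qs(t)$ --- otherwise $I_2(y)=I_1(X_0)$ would lie in $\qs(t)$, contradicting the requirement $I_2\notin\qs(t)$ in Definition~\ref{def:rat-inv}. Hence $I_1$ is an $x$-invariant, and the proof is complete.

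The only point that really needs care, and the one I would flag as the main obstacle, is the bookkeeping hidden in ``substitute $x=X_i$'': the identity $I_1(x)=I_2(Y_j(x))$ is a priori an equality of Laurent series in $t$ with coefficients in $\qs(x)$, and one must be certain that the composite $Y_j(X_i(y))$ is a legitimate element of the ambient ring and that the equality survives the substitution. Working inside the function field of the kernel curve (or, more concretely, inside the fraction field of $\qs[x,y][[t]]/(K)$ after adjoining the two square roots needed to split the kernel as a quadratic in $x$ and as a quadratic in $y$) makes this automatic: an equality of rational functions on an irreducible curve may be evaluated at any point where both sides are defined. Once that framework is in place the lemma carries essentially no computational content.
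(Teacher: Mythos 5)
Your proof is correct, and it is essentially the argument the paper intends: the lemma is left unproved in this extended abstract, but the remark preceding it (that $I_2(Y_0)+I_2(Y_1)$ is a symmetric function of the roots, hence rational in $x$ via~\eqref{K-abc}) and the remark following it (that having invariants means $I_1(x)-I_2(y)$ vanishes on the curve $K(x,y)=0$) are exactly the two steps you carry out, with the evaluation at $(X_i,y)$ on the kernel curve giving the ``moreover'' identity. Your closing observation that one should work in the function field of the kernel curve (or otherwise justify the substitution $x=X_i$ in an identity of Laurent series) is the right point to flag, and your check that $I_1\notin\qs(t)$ correctly completes the verification that $I_1$ is an $x$-invariant in the sense of Definition~\ref{def:rat-inv}.
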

Note that having invariants is just saying that $I_1(x)-I_2(y)$ vanishes on
the curve $K(x,y)=0$, which alludes to Hilbert's Nullstellensatz. In Gessel's case, $I_2(y)$ was the function
$I(y)$ of~\eqref{inv-I-G}, and we find $I_1(x)=-t/x^2+1/x+2t+x-tx^2$.

\begin{prop}\label{prop:finite-invariant}
A quadrant model 
has rational invariants if and only
 if the associated group is finite.  
\end{prop}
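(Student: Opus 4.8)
The plan is to prove the two implications separately, working with the explicit birational involutions $\Phi$ and $\Psi$ generating $G(\cS)$, which act on the curve $K(x,y)=0$ by swapping the two $y$-roots (resp.\ $x$-roots) of the kernel. The cleanest way to see this is to note that, with $x$ fixed, $\Psi$ sends the point $(x,Y_0)$ to $(x,Y_1)$ and fixes $x$, while $\Phi$ sends $(X_0,y)$ to $(X_1,y)$ and fixes $y$; so a rational function $f(x,y)$ is ``$\Psi$-invariant'' iff, restricted to the kernel curve, it only depends on $x$, and ``$\Phi$-invariant'' iff it only depends on $y$. Thus a $y$-invariant in the sense of Definition \ref{def:rat-inv} is exactly a nonconstant rational function of $y$ that is $\Psi$-invariant on the curve; by Lemma \ref{lem:xy-inv} its companion $x$-invariant is then $\Phi$-invariant on the curve as well.

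For the ``only if'' direction I would argue by contradiction: assume the model has a $y$-invariant $I_2(y)$ and an $x$-invariant $I_1(x)$ with $I_1 = I_2$ on the curve (as provided by Lemma \ref{lem:xy-inv}), but that $G(\cS)$ is infinite. Consider the rational function $h(x,y) := I_1(x) - I_2(y)$. It vanishes on the kernel curve, hence (the curve being irreducible for the relevant models, or after handling the few reducible exceptions) $K(x,y)$ divides the numerator of $h$. Now apply $\Phi$: since $\Phi$ fixes $y$ and sends the curve to itself, $I_1(\Phi_x(x,y)) = I_2(y) = I_1(x)$ on the curve, where $\Phi_x(x,y) = \widetilde c(y)/(\widetilde a(y) x)$; that is, $I_1$ takes the same value at $x$ and at $\widetilde c(y)/(\widetilde a(y)x)$ along the curve. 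Iterating $\Phi$ and $\Psi$ we get that $I_1$ is constant along every orbit of $G(\cS)$ on the curve. Because the group is infinite and the orbits of a generic point are therefore infinite (this uses that the half-turn $\Phi\Psi$ has infinite order, a standard fact equivalent to the group being infinite), the rational function $I_1$ would take a fixed value at infinitely many points of a fixed fiber of the projection to the $t$-line — forcing $I_1$ to be constant, contradicting $I_1 \notin \qs(t)$. The key technical point here is controlling the orbit: one shows the orbit of a generic point is Zariski-dense in the curve, so a rational function constant on it is globally constant.

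For the ``if'' direction, suppose $G(\cS)$ is finite, say of order $2m$. Then the orbit of a generic point under $G(\cS)$ has $2m$ elements, and I would build an invariant by symmetrization. Let $w = \Phi\Psi$ be the rotation of order $m$; pick any rational function $g(x,y)$ and set $I_1(x) := \sum_{k=0}^{m-1} g \circ w^k$, symmetrized further over $\Psi$ if needed, then restrict to the curve and eliminate $y$: the result depends only on $x$ (by $w$-invariance it is $\Phi$- and $\Psi$-invariant up to the symmetrization, hence constant on $y$-fibers of the curve), giving an $x$-invariant, and symmetrically a $y$-invariant, provided one chooses $g$ so that the symmetrization is nonconstant. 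Nonconstancy is arranged by a dimension/genericity argument: the $w$-orbit sum of a generic $g$ cannot be constant unless every function is, since $w$ has finite order and acts nontrivially. Concretely, for small-step models one can take $g = x$ or $g = x + \by$ or similar and check nonconstancy case by case, but the uniform argument is the orbit-sum construction. The main obstacle I anticipate is the bookkeeping around degenerate and reducible kernel curves (including singular step sets, and models where the curve has genus $0$ and $X_0,X_1$ or $Y_0,Y_1$ are rational), where ``generic point has infinite/$2m$-element orbit'' needs a separate, slightly delicate verification; absorbing those finitely many exceptional models into the general argument, or treating them by hand, is where the real work lies.
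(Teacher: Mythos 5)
Your argument for the ``infinite group $\Rightarrow$ no rational invariant'' direction is the same as the paper's: the paper disposes of it in one sentence ($I_1(x)$ would take the same value at the infinitely many $x$-coordinates of a group orbit, which is impossible for a nonconstant rational function), and your version merely spells out the orbit/Zariski-density bookkeeping. Where you genuinely diverge is the converse. The paper proves ``finite group $\Rightarrow$ invariants exist'' by explicit exhibition: a uniform formula $I_1(x)=x+1/x$, $I_2(y)=-\widetilde b(y)/\widetilde a(y)$ for the 16 vertically symmetric models, and tables (Appendix~\ref{app:inv}) for the remaining ones, reduced via Lemma~\ref{lem:invariant_invariants}. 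You instead propose an abstract averaging argument: sum a test function $g$ over the finite orbit of $\Phi\Psi$ and symmetrize over $\Psi$, so that the result is $G(\cS)$-invariant on the kernel curve and hence (being symmetric in $Y_0,Y_1$, resp.\ $X_0,X_1$) expressible as a rational function of $x$ alone and of $y$ alone. This is a legitimate and more uniform route -- it is close in spirit to the systematic procedure the paper alludes to in its Remark (adapting~\cite[Thm.~4.2.9--4.2.10]{FIM-99}) -- and it avoids all case analysis. What it costs you is twofold. First, the nonconstancy of the orbit sum is the real content of that direction, and ``a dimension/genericity argument'' is not yet a proof; the clean way to close it is to observe that the function field of the (irreducible) kernel curve over $\qs(t)$ has transcendence degree $1$ and is algebraic of degree at most $|G(\cS)|$ over its invariant subfield, so that subfield strictly contains $\qs(t)$, and orbit sums span it. Second, the paper needs the \emph{explicit} invariants downstream (they are the raw material for the algebraicity proofs of Sections~\ref{sec:gessel}--\ref{sec:extensions}), so the constructive, table-based proof is doing double duty that a pure existence argument does not.
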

\begin{proof}
A model with an infinite group cannot have
rational invariants: the function $I_1(x)$ would take the same value
for infinitely many values of $x$, which is not possible for a
rational function. Conversely, each of the 23 models with a
finite group admits rational invariants, listed in Appendix~\ref{app:inv}. 
\end{proof}

In the next section, we introduce a weaker notion of (possibly
irrational) invariants, which guarantees that any 
 quadrant
model now has a (weak) invariant. One key difference with the
algebraic setting of the above section is that the new notion is
analytic in nature.

  \medskip
The second ingredient of Section~\ref{sec:gessel} was the
identity~\eqref{decouple-gessel}, which we formalize as follows.
\begin{defn}\label{def:decoupled}
  A quadrant model  is \emm decoupled, if there exist
  rational functions $F(x) \in \qs(x,t)$ and $G(y)\in \qs(y,t)$ such
  that, as soon as $K(x,y)=0$, one has
$
xy=F(x)+G(y).
$
\end{defn}
This is in fact equivalent to the (apparently weaker) identity
$x(Y_0-Y_1)= G(Y_0)-G(Y_1)$ (which was~\eqref{decouple-gessel} in
Gessel's case), and  
a possible choice for $F$ is then 
 $F(x)=xY_0-G(Y_0)=xY_1-G(Y_1)$.
In Gessel's case, we had $G(y)=-1/(t(1+y))$,
corresponding to $F(x)=1/t-1/x$.

So which models are decoupled? Not all, at any rate: for any model
that has a vertical symmetry, the series $Y_i$ are symmetric in $x$
and $\bx$, and so any expression of $x$ of the form $
(G(Y_0)-G(Y_1))/(Y_0-Y_1)$ would be at the same time an expression of
$1/x$. At the moment, we have found 13 decoupled models,
shown in Tables~\ref{tab:decoupling_functions-finite} and~\ref{tab:decoupling_functions-infinite}: 4 with a 
finite group (and these are, as one can expect from the algebraicity
result of Section~\ref{sec:gessel}, those with an
algebraic \gf), and 9 with an infinite group.

\begin{table}[ht]
\begin{center}
 \small
\begin{tabular}{|c|c|c||c|c|c|c|}
\hline
Model & \begin{tikzpicture}[scale=.2] 
    \draw[->] (0,0) -- (1,1);
    \draw[->] (0,0) -- (-1,0);
    \draw[->] (0,0) -- (0,-1);
        \draw[-] (0,-1) -- (0,-1) node[below] {\phantom{$\scriptstyle 1$}};
  \end{tikzpicture} \  \begin{tikzpicture}[scale=.2] 
    \draw[->] (0,0) -- (0,1);
    \draw[->] (0,0) -- (-1,-1);
    \draw[->] (0,0) -- (1,0);
    \draw[-] (0,-1) -- (0,-1) node[below] {\phantom{$\scriptstyle 1$}};
  \end{tikzpicture} \     \begin{tikzpicture}[scale=.2] 
      \draw[->] (0,0) -- (0,-1);
    \draw[->] (0,0) -- (1,1);
    \draw[->] (0,0) -- (-1,0);
    \draw[->] (0,0) -- (0,1);
    \draw[->] (0,0) -- (-1,-1);
    \draw[->] (0,0) -- (1,0);
    \draw[-] (0,-1) -- (0,-1) node[below] {\phantom{$\scriptstyle 1$}};
  \end{tikzpicture} &   \begin{tikzpicture}[scale=.2] 
      \draw[->] (0,0) -- (-1,-1);
    \draw[->] (0,0) -- (1,1);
    \draw[->] (0,0) -- (-1,0);
    \draw[->] (0,0) -- (1,1);
    \draw[->] (0,0) -- (1,0);
    \draw[-] (0,-1) -- (0,-1) node[below] {\phantom{$\scriptstyle 1$}};
  \end{tikzpicture} & \begin{tikzpicture}[scale=.2] 
    \draw[->] (0,0) -- (-1,0) node[left] {$\scriptstyle 1$};
    \draw[->] (0,0) -- (-1,-1) node[left] {$\scriptstyle 1$};
    \draw[->] (0,0) -- (0,-1) node[below] {$\scriptstyle \lambda$};
    \draw[->] (0,0) -- (1,-1) node[right] {$\scriptstyle 1$};
    \draw[->] (0,0) -- (1,0) node[right] {$\scriptstyle 2$};
    \draw[->] (0,0) -- (1,1) node[right] {$\scriptstyle 1$};
  \end{tikzpicture} & \begin{tikzpicture}[scale=.2] 
    \draw[->] (0,0) -- (-1,0) node[left] {$\scriptstyle 1$};
    \draw[->] (0,0) -- (-1,1) node[left] {$\scriptstyle 1$};
    \draw[->] (0,0) -- (0,1) node[above] {$\scriptstyle 2$};
    \draw[->] (0,0) -- (1,1) node[right] {$\scriptstyle 1$};
    \draw[->] (0,0) -- (1,0) node[right] {$\scriptstyle 2$};
    \draw[->] (0,0) -- (1,-1) node[right] {$\scriptstyle 1$};
    \draw[->] (0,0) -- (0,-1) node[below] {$\scriptstyle 1$};
  \end{tikzpicture}&
  \begin{tikzpicture}[scale=.2] 
    \draw[->] (0,0) -- (-1,0) node[left] {$\scriptstyle 2$};
    \draw[->] (0,0) -- (-1,1) node[left] {$\scriptstyle 1$};
    \draw[->] (0,0) -- (0,1) node[above] {$\scriptstyle 1$};
    \draw[->] (0,0) -- (-1,-1) node[left] {$\scriptstyle 1$};
    \draw[->] (0,0) -- (1,0) node[right] {$\scriptstyle 1$};
    \draw[->] (0,0) -- (1,-1) node[right] {$\scriptstyle 1$};
    \draw[->] (0,0) -- (0,-1) node[below] {$\scriptstyle 2$};
  \end{tikzpicture} & \begin{tikzpicture}[scale=.2] 
    \draw[->] (0,0) -- (-1,0) node[left] {$\scriptstyle 2$};
    \draw[->] (0,0) -- (-1,1) node[left] {$\scriptstyle 1$};
    \draw[->] (0,0) -- (0,1) node[above] {$\scriptstyle 2$};
    \draw[->] (0,0) -- (1,1) node[right] {$\scriptstyle 1$};
    \draw[->] (0,0) -- (1,0) node[right] {$\scriptstyle 1$};
    \draw[->] (0,0) -- (0,-1) node[below] {$\scriptstyle 1$};
        \draw[->] (0,0) -- (-1,-1) node[left] {$\scriptstyle 1$};
  \end{tikzpicture}  \\
\hline
%
$G$& $-\frac{1}{y}$
 &$- \frac1{t(1+y)}$ &$-\frac{1+\lambda t}{t(1+y)}$
& $-y+\frac{1}{y}-\frac{1+3t}{t(1+y)}$ &  $-y^2
+y(1+\frac{1}{t})+\frac{3+\frac{1}{t}}{y}$ & $-y-\frac{1}{y}$ \\ 
\hline
\end{tabular}
\end{center}
\normalsize
  \caption{Decoupling functions for algebraic models
(unweighted or weighted). Recall that $F(x)=xY_i-G(Y_i)$.}
  \label{tab:decoupling_functions-finite}
\end{table}


 \begin{table}[h!]
 \small
\begin{center}
\begin{tabular}{|c|c|c|c|c|c|c|c|}
\hline
&&&&&&&\\
Model & \begin{tikzpicture}[scale=.2] 
    \draw[->] (0,0) -- (0,1);
    \draw[->] (0,0) -- (1,0);
    \draw[->] (0,0) -- (-1,0);
    \draw[->] (0,0) -- (-1,-1);
    \draw[-] (0,-1) -- (0,-1) node[below] {$\scriptstyle \# 1$};
  \end{tikzpicture} &   \begin{tikzpicture}[scale=.2] 
    \draw[->] (0,0) -- (0,1);
    \draw[->] (0,0) -- (1,0);
    \draw[->] (0,0) -- (-1,1);
    \draw[->] (0,0) -- (-1,-1);
    \draw[-] (0,-1) -- (0,-1) node[below] {$\scriptstyle \# 2$};
  \end{tikzpicture} &     \begin{tikzpicture}[scale=.2] 
    \draw[->] (0,0) -- (0,1);
    \draw[->] (0,0) -- (1,1);
    \draw[->] (0,0) -- (0,-1);
    \draw[->] (0,0) -- (-1,0);
    \draw[-] (0,-1) -- (0,-1) node[below] {$\scriptstyle \# 3$};
  \end{tikzpicture} & \begin{tikzpicture}[scale=.2] 
    \draw[->] (0,0) -- (0,1);
    \draw[->] (0,0) -- (1,0);
    \draw[->] (0,0) -- (1,-1);
    \draw[->] (0,0) -- (-1,0);
    \draw[-] (0,-1) -- (0,-1) node[below] {$\scriptstyle \# 4$};
  \end{tikzpicture} &     \begin{tikzpicture}[scale=.2] 
    \draw[->] (0,0) -- (0,1);
    \draw[->] (0,0) -- (1,0);
    \draw[->] (0,0) -- (1,1);
    \draw[->] (0,0) -- (-1,-1);
    \draw[->] (0,0) -- (-1,0);
    \draw[-] (0,-1) -- (0,-1) node[below] {$\scriptstyle \# 5$};
  \end{tikzpicture} &       \begin{tikzpicture}[scale=.2] 
    \draw[->] (0,0) -- (0,1);
    \draw[->] (0,0) -- (0,-1);
    \draw[->] (0,0) -- (1,1);
    \draw[->] (0,0) -- (-1,-1);
    \draw[->] (0,0) -- (-1,0);
    \draw[-] (0,-1) -- (0,-1) node[below] {$\scriptstyle \# 6$};
  \end{tikzpicture} &     \begin{tikzpicture}[scale=.2] 
    \draw[->] (0,0) -- (-1,1);
    \draw[->] (0,0) -- (-1,0);
    \draw[->] (0,0) -- (1,0);
    \draw[->] (0,0) -- (-1,-1);
    \draw[->] (0,0) -- (0,1);
    \draw[-] (0,-1) -- (0,-1) node[below] {$\scriptstyle \# 7$};
  \end{tikzpicture}  \\
\hline
$G$& $-\frac{1}{y}$ &$-y-\frac{1}{y}$ &$-y-\frac{1}{y}$ & $-y^2+\frac{y}{t}+\frac{1}{y}$&$-\frac{1+t}{t(y+1)}-y$ & $-\frac{1}{y}$&$-y-\frac{1}{y}$   \\
\hline
\end{tabular}

\medskip
\begin{tabular}{|c|c|}
\hline
&\\
 \begin{tikzpicture}[scale=.2] 
    \draw[->] (0,0) -- (1,1);
    \draw[->] (0,0) -- (0,-1);
    \draw[->] (0,0) -- (1,0);
    \draw[->] (0,0) -- (-1,0);
    \draw[->] (0,0) -- (0,1);
    \draw[-] (0,-1) -- (0,-1) node[below] {$\scriptstyle \# 8$};
  \end{tikzpicture} &    \begin{tikzpicture}[scale=.2] 
    \draw[->] (0,0) -- (1,0);
    \draw[->] (0,0) -- (0,-1);
    \draw[->] (0,0) -- (0,1);
    \draw[->] (0,0) -- (-1,1);
    \draw[->] (0,0) -- (1,-1);
    \draw[-] (0,-1) -- (0,-1) node[below] {$\scriptstyle \# 9$};
  \end{tikzpicture} \\
\hline
$-\dfrac{1}{y}-y$  &$-\dfrac{1}{y^2}+\dfrac{1}{ty}+\dfrac{(t+1)y}{t}-y^2$ \\
\hline
\end{tabular}
\normalsize
\end{center}
  \caption{Decoupling functions for nine infinite group models.}
  \label{tab:decoupling_functions-infinite}
\end{table}

The final ingredient is the ``invariant Lemma''
(Lemma~\ref{lem:inv-gessel}). It admits  analogues for the 4 algebraic
models to the left of 
Table~\ref{tab:decoupling_functions-finite},
except for the second one. 
For this model, we were unable to find an analogue of the invariant lemma, and we solve it in a slightly different way (using a weaker form of the invariant lemma) in the next section.
\begin{prop}
  The existence of rational invariants,  decoupling  functions and
invariant lemma yields new and uniform proofs of the algebraicity of
the $4$ leftmost models  of 
Table~\ref{tab:decoupling_functions-finite},
  except the second
(called reverse Kreweras model).
 This extends to the $4$ weighted models 
shown on the right of this table.

\end{prop}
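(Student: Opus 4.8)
The plan is to replay, for each of the seven remaining models (the unweighted Kreweras and double Kreweras models, Gessel's model, and the four weighted models in the right‑hand block of Table~\ref{tab:decoupling_functions-finite}), the three‑step argument of Section~\ref{sec:gessel}; only the \emph{invariant lemma} will fail to be uniform, and it is exactly its absence that forces us to leave out the reverse Kreweras model.

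First, for a fixed model, I would write the functional equation~\eqref{eqfunc} with its kernel $K$ and with $R(x)=K(x,0)Q(x,0)$, $S(y)=K(0,y)Q(0,y)$. Being a finite‑group model it has a rational $y$‑invariant $I(y)$, read off from Appendix~\ref{app:inv}, so $I(Y_0)=I(Y_1)$; being decoupled (Table~\ref{tab:decoupling_functions-finite}) it comes with rational $F(x),G(y)$ satisfying $xy=F(x)+G(y)$ on $K(x,y)=0$. Evaluating~\eqref{eq:func_spec} at $Y_0$ and $Y_1$ gives~\eqref{SYi}, and eliminating $xY_i$ through the decoupling identity rewrites it as $J(Y_0)=J(Y_1)$, where $J(y):=S(y)-G(y)$. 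As in the Gessel case one first fixes a rational parametrization $x=x(u)$ — the analogue of $x=t+t^2(u+1/u)$ — making $Y_0,Y_1$ Laurent series in $t$ with coefficients in $\qs(u)$ and making all the specializations $Q(x,Y_i)$, $Q(0,Y_i)$ well defined, so that $I(Y_0)=I(Y_1)$ and $J(Y_0)=J(Y_1)$ become genuine identities between Laurent series in $t$.

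Next I would prove, for the model at hand, the analogue of Lemma~\ref{lem:inv-gessel} — a Laurent series $F(y)$ in $t$ whose coefficient of $t^n$ is a polynomial in $y$ of degree $\le n+n_0$ and which satisfies $F(Y_0)=F(Y_1)$ under the parametrization must be independent of $y$ — by the same elementary degree argument, using that $Y_0Y_1$ and $Y_0+Y_1$ are explicit rational functions of $x(u)$. Neither $I(y)$ nor $J(y)$ obeys the hypotheses of this lemma (their $t$‑coefficients have poles in $y$, at the finitely many explicit poles of $I$ and of $G$), but, as in the passage to~\eqref{eq:IJ}, a suitable polynomial combination does: multiply $I$ by factors $J(y)-J(p)$ at each pole $p$ of $I$ not shared by $J$, then subtract a polynomial in $J$ with coefficients that are series in $t$ to cancel the remaining poles. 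The result, being an invariant satisfying the lemma, is constant, equal to its value at any regular point; this yields a polynomial identity relating $I(y)$, $J(y)$ and finitely many unknown series in $t$ (Taylor coefficients of $S$ at the poles of $G$). Substituting the expressions of $I,J$ in terms of $t$, $y$ and $S(y)$ turns it into a polynomial equation for $S(y)$ of bounded degree with the single catalytic variable $y$. By the theory of such equations~\cite{mbm-jehanne} (with~\cite{bousquet-versailles} in the quadratic case) its power‑series solution — unique by a routine valuation‑in‑$t$ argument, hence equal to $S(y)$ — is algebraic. Finally $R(x)=R(0)+xY_0-S(Y_0)$ by~\eqref{eq:func_spec}, so $R$ is algebraic over $\qs(x,t)$ (as $Y_0$ is), and then $Q(x,y)=\bigl(R(x)+S(y)-R(0)-xy\bigr)/K(x,y)$ is algebraic. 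The four weighted models should go through verbatim with the weighted kernels and the decoupling functions on the right of Table~\ref{tab:decoupling_functions-finite}.

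The hard part will be the two previous steps taken together: establishing the invariant lemma and, above all, identifying model by model the right polynomial combination of $I$ and $J$ that kills \emph{every} pole in $y$ while keeping the $y$‑degree of the coefficient of $t^n$ linear in $n$ — the pole orders of $I$ and $G$, hence which powers of $J$ to subtract and how many auxiliary unknowns appear, vary from model to model. For the reverse Kreweras model this already fails at the invariant‑lemma step (no suitable analogue seems to exist), which is why that model is handled by the analytic method of Section~\ref{sec:analysis} instead.
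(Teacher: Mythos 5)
Your proposal is correct and follows essentially the same route as the paper: the extended abstract gives no separate proof of this proposition beyond the worked Gessel example of Section~\ref{sec:gessel}, and your plan is precisely the intended replay of that three-step argument (rational invariant from Appendix~\ref{app:inv}, decoupling to form $J=S-G$, a model-specific invariant lemma plus a pole-cancelling polynomial combination of $I$ and $J$, then the catalytic-variable theorem of~\cite{mbm-jehanne}) for each remaining model. You also correctly locate the reverse Kreweras exception at the failure of the invariant-lemma analogue, exactly as the paper does.
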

For Gessel's walks, this is the shortest known proof. For the
rightmost weighted model, it is the first one. 


\medskip
\noindent{\bf Remark.} 
 In the finite group case, there exists a systematic procedure to construct
the invariants and (when they exist) the
decoupling functions,
adapting~\cite[Thm.~4.2.9 and Thm.~4.2.10]{FIM-99} to our context. 
In the infinite group case, the 9 decoupling functions have been guessed-and-checked.
\section{An analytic invariant method}
\label{sec:analysis}
We now move to an analytic world, and consider $Q(x,y)$ as a
function of three complex variables,  analytic in the
polydisc $\{|x|< 1, |y|<1, |t|<1/|\cS|\}$ (at least). 
This section borrows its notation and several results from the analytic
approach of quadrant models~\cite{FIM-99,Ra-12}.
 The roots
$Y_{0,1}$ of the kernel (now called \emm branches, of $K$) are 
$$
    Y_{0,1}(x)=\frac{- b(x)\pm\sqrt{ b(x)^2-4 a(x) c(x)}}{2 a(x)},
$$
where $a, b$ and $c$ are defined by~\eqref{K-abc}.  The discriminant  $ d(x):=
 b(x)^2-4 a(x) c(x)$ has degree three or four,
hence there are four branch points 
$x_1, \ldots, x_4$ 
(depending on $t$), with
$x_4=\infty$ if $ d(x)$ has degree three.   We define symmetrically
the branches $X_{0,1}(y)$ and  their four branch 
points $y_\ell$.
\begin{lem}[\cite{FIM-99}]
\label{lem:branch_points}
Let $t\in (0,1/\vert \mathcal S\vert)$. The $x_\ell$'s are real. Two
branch points (say $x_1$ and $x_2$) are in the (open) unit disc, with
$\vert x_1\vert \leq \vert x_2\vert$ and $x_2>0$. The other two  (say
$x_3$ and $x_4$) are outside the (closed) unit disc, with $\vert
x_3\vert \leq \vert x_4\vert$ and $x_3>0$. The discriminant
$ d(x)$ is negative on $(x_1,x_2)$ and $(x_3,x_4)$, where if $x_4<0$, 
the interval 
$(x_3,x_4)$ stands for $(x_3,\infty)\cup (-\infty,x_4)$.
\end{lem}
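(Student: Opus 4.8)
The plan is to study the discriminant $d(x) = b(x)^2 - 4a(x)c(x)$ as a real polynomial in $x$, whose coefficients are polynomials in $t$, and to locate its real roots (the branch points $x_\ell$) together with the sign of $d$ between them, for $t$ in the physical interval $(0,1/|\cS|)$. The key observation is that $d(x)$ governs when the two branches $Y_{0,1}(x)$ are complex conjugate (where $d<0$) versus real and distinct (where $d>0$), and that on the unit circle $|x|=1$ the kernel has a clean probabilistic interpretation: $K(x,y)/(xy)$ is, up to sign, $1 - t \sum_{(i,j)} x^i y^j$, which for $t < 1/|\cS|$ and $|x|=|y|=1$ cannot vanish unless $x=y=1$, and even then only for $t=1/|\cS|$.

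First I would recall (or re-derive) the standard facts from~\cite{FIM-99}: one checks that $d(1)$ and $d(-1)$ have controllable signs for $t\in(0,1/|\cS|)$ — a short computation shows $d(\pm 1) > 0$ in this range (using that the step set projects to a nondegenerate walk on $\zs$ in each coordinate, so $a,b,c$ do not all degenerate) — while $d(0) = c(0)^2 \ge 0$ and the leading behaviour at $x\to\pm\infty$ is controlled by the sign of $\tilde a(\cdot)$-type coefficients. Then I would argue that $d$ has exactly four real roots (counted with $x_4 = \infty$ when $\deg d = 3$): for small $t$, the two "small" roots $x_1,x_2$ collide at $0$ as $t\to 0$ (since $Y_0 \to 0$, $Y_1\to\infty$ and the branch points pinch), and the two "large" roots escape to $\pm\infty$; a continuity/monotonicity argument in $t$, using that $d$ cannot acquire complex roots (they would have to cross the real axis, forcing a double real root, which one rules out by examining $\partial_x d$ or by the group-theoretic/Galois structure of the kernel curve being a genus-$0$ or genus-$1$ curve with exactly four ramification points), pins down that there are precisely two real roots in $(-1,1)$ and two outside $[-1,1]$. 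The signs $x_2 > 0$ and $x_3 > 0$ follow from the fact that $Y_{0,1}$ must be positive real series (power series in $t$ with nonnegative coefficients at the relevant specializations), so the branch points reachable along the positive real axis from the physical region are positive; combined with $d>0$ at $x=\pm 1$ and $d(0)\ge 0$, the only consistent configuration places $x_1 \le 0 \le x_2$ inside the disc (hence $x_2>0$) and $x_3 > 1$ outside.

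For the sign statement, once the four roots are ordered $x_1 \le x_2 < x_3 \le x_4$ (with the wraparound convention for $x_4<0$), the polynomial $d$ alternates sign between consecutive roots; knowing $d(\pm 1) > 0$ with $-1,1$ lying respectively in $(x_1,x_2)$'s complement region... — more carefully, since $-1$ and $1$ are both outside $(x_1,x_2)$ and outside $(x_3,x_4)$ (they sit in the "middle" interval $(x_2,x_3)$, which contains the unit circle), and $d>0$ there, the sign pattern forces $d<0$ precisely on $(x_1,x_2)$ and on $(x_3,x_4)$. The symmetric statement for the $y_\ell$ and $X_{0,1}$ is obtained by exchanging the roles of $x$ and $y$, i.e.\ by the same argument applied to $\tilde d(y) := \tilde b(y)^2 - 4\tilde a(y)\tilde c(y)$.

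I expect the main obstacle to be the uniform argument — valid for all 79 (or all non-singular) step sets at once — that $d(x)$ has no repeated real root and no complex root for $t\in(0,1/|\cS|)$, equivalently that the branch points stay distinct and real throughout the physical range; degenerate models (e.g.\ those where $\deg d$ drops, or where $a(x)$ or $c(x)$ has special structure) have to be handled or excluded, and this is exactly why the lemma is quoted from~\cite{FIM-99} rather than proved from scratch. In an extended abstract the cleanest route is to invoke that reference for the delicate part and only sketch the sign bookkeeping above, noting that the singular case (where $i+j\ge 0$ for all steps) is excluded throughout this section, so the remaining models all have $d$ of degree $3$ or $4$ with the stated generic behaviour.
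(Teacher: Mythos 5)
The paper itself contains no proof of this lemma: it is imported wholesale from~\cite{FIM-99} (it corresponds to Lemma~2.3.8 and Section~2.3 of that book), so there is no in-paper argument to compare yours against, and your outline has to stand on its own. Its global strategy is indeed the standard one and matches the cited source in spirit: for $|x|=1$ and $t<1/|\cS|$ the kernel $K(x,\cdot)$ has one root inside and one root outside the unit circle, so the two roots have distinct moduli, cannot be complex conjugate, and therefore $d(\pm 1)>0$ and no branch point lies on the unit circle; one then counts real roots and reads off the sign alternation, with $1$ and $-1$ sitting in the middle interval $(x_2,x_3)$ where $d>0$.

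However, several concrete steps are wrong or unsupported. First, $d(0)=b(0)^2-4a(0)c(0)$, not $c(0)^2$: with $K(x,y)=a(x)y^2+b(x)y+c(x)$ one has $a(0)=t\,[(-1,1)\in\cS]$, $b(0)=t\,[(-1,0)\in\cS]$, $c(0)=t\,[(-1,-1)\in\cS]$, so $d(0)=-4t^2<0$ for, e.g., model $\#2$ of Table~\ref{tab:decoupling_functions-infinite}. Second, and consequently, your conclusion ``$x_1\le 0\le x_2$'' is not what the lemma asserts and is false in general: for Kreweras' model $d(x)=(t-x)^2-4t^2x^3$ and both small branch points are positive (close to $x=t$ for small $t$); the lemma only claims $|x_1|\le|x_2|$ and $x_2>0$. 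Third, the genuinely delicate point --- that $d$ has four \emph{real} roots, none on the unit circle and none repeated, uniformly for all non-singular models and all $t\in(0,1/|\cS|)$ --- is only gestured at via ``continuity/monotonicity'' and ``Galois structure of the kernel curve''; this is exactly the content that must either be proved (as in~\cite{FIM-99}) or cited, and the heuristic appeal to $Y_{0,1}$ being ``positive real series'' does not locate $x_2$ and $x_3$ on the positive axis. So the proposal correctly identifies why the lemma is quoted rather than reproved, but as a proof it has genuine gaps and two erroneous intermediate claims.
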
 

The branches $Y_{0,1}$ are meromorphic on $\mathbb C\setminus
([x_1,x_2]\cup [x_3,x_4])$. 
On the cuts $[x_1,x_2]$ and  $[x_3,x_4]$,
the two branches $Y_{0,1}$ 
still exist and are complex conjugate. A key object
in our definition of weak invariants 
is the  curve  $\mathcal L$ 
(depending on $t$)
defined by
$$
     \mathcal L =Y_0([x_1,x_2])\cup Y_1([x_1,x_2])=\{y\in \mathbb C:
     K(x,y)=0 \text{ and } x\in[x_1,x_2]\}. 
$$
By construction, it is symmetric with respect to the real axis.  If
$\mathcal L$ is  bounded (as for the models of
Table~\ref{tab:decoupling_functions-infinite}),
 we denote by $\mathcal G_\mathcal
L$  the  domain enclosed by $\mathcal L$. 
Otherwise, $\mathcal G_\mathcal L$ is the domain delimited by $\mathcal L$ and containing
the real point at $-\infty$.
 See Figure~\ref{fig:curves} for examples.

\begin{figure}[ht]
  \includegraphics[width=6cm,height=6cm]{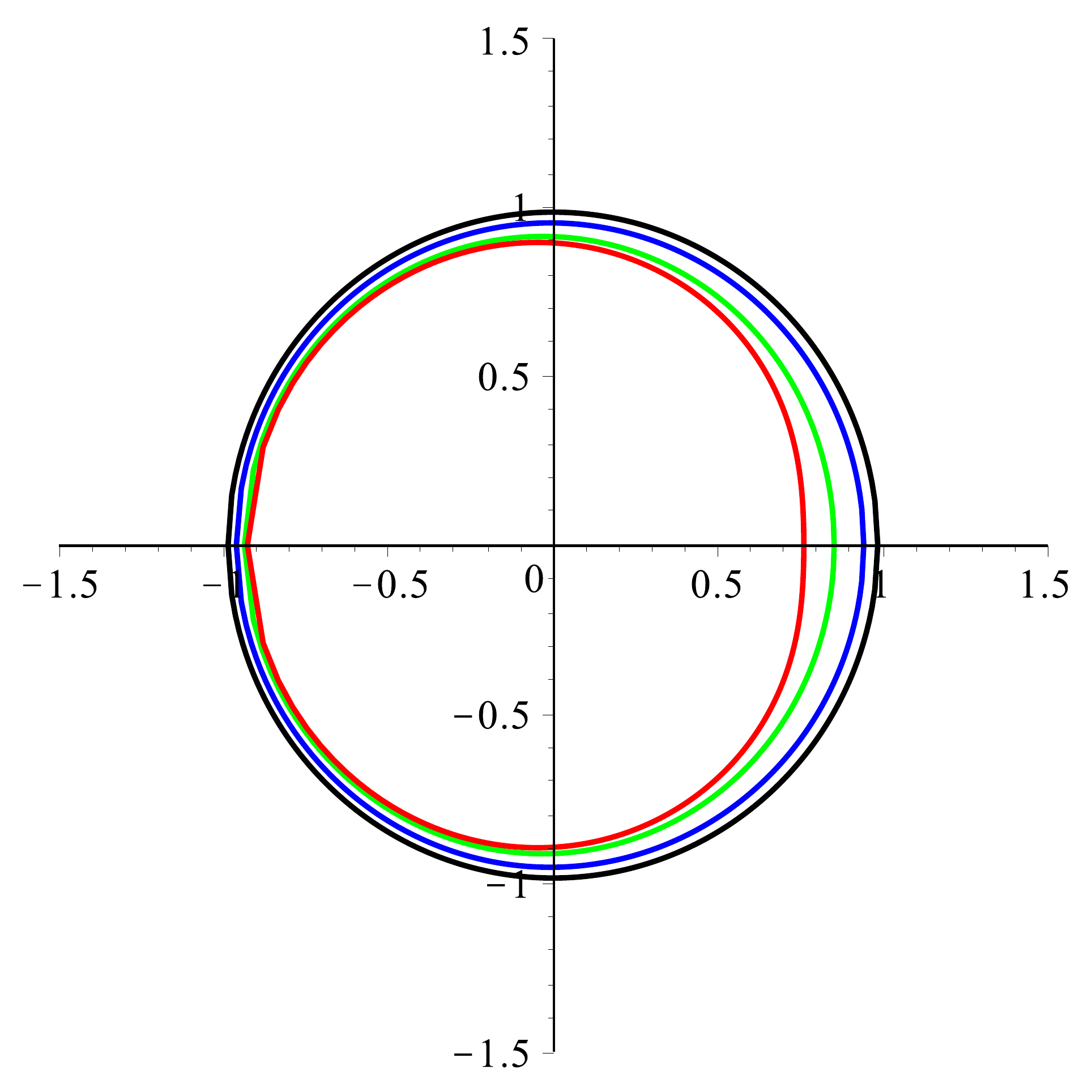} 
 \hspace{24mm}
\includegraphics[width=6cm,height=6cm]{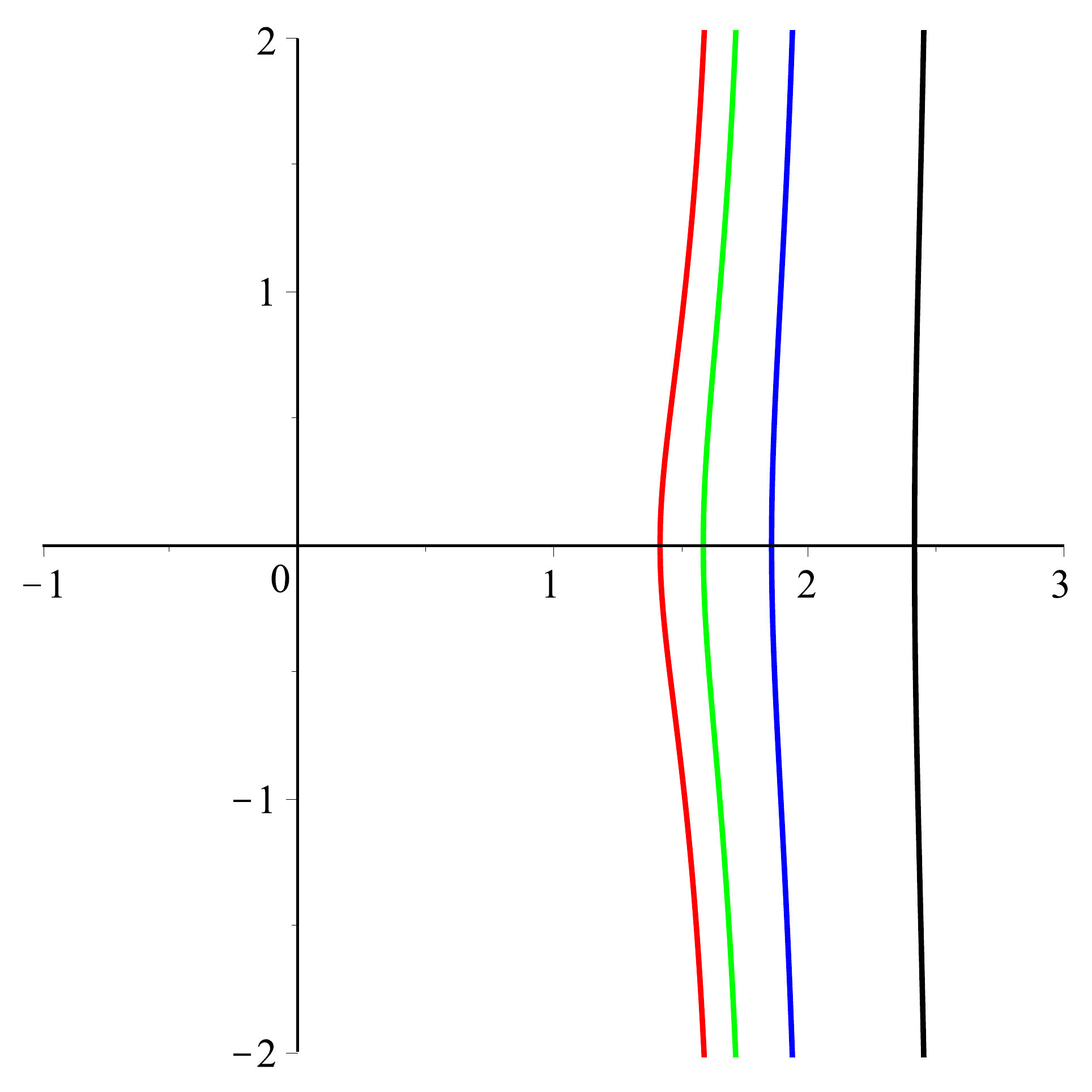}
\caption{The curves $\mathcal L$ for model ${\#3}$
of Table~\ref{tab:decoupling_functions-infinite}
  (for $t=0.03$,
$0.1$,
$0.2$ 
and $0.25$, 
as one moves closer to the origin) and for the reverse Kreweras model
(second model in Table~\ref{tab:decoupling_functions-finite};
$t=0.2$,
$0.25$,
$0.28$
and $0.33$,
from  right to left).}
\label{fig:curves}
\end{figure}

The function $S(y)=K(0,y)Q(0,y)$ is analytic
in $\mathcal G_\mathcal L$ (by~\cite[Thm.~5]{Ra-12})
and bounded on $\mathcal G_\mathcal L\cup\mathcal L$ 
(this follows from \eqref{eq:func_spec} when $x\in[x_1,x_2]$). 
Moreover, Eq.~(11) of~\cite{Ra-12} tells us that, for $x \in [x_1, x_2]$,
 \beq\label{eqker:Y01}
S(Y_0)-xY_0=S(Y_1)-xY_1.
\eeq


\subsection{Weak invariants}

\begin{defn}
\label{def:weak-inv} 
 A function
$I(y)\equiv I(y;t)$ is 
 a \emm  weak invariant, of a quadrant model
$\cS$ if for $t \in (0,1/|\cS|)$:
\begin{itemize}
\item it is meromorphic in the domain $\mathcal G_\mathcal L$,
and  admits finite limit values on the curve $\mathcal L$;
\item for any $y\in\mathcal L$, we have $I(y)=I(\overline{y})$.
\end{itemize}
\end{defn}

The second condition is indeed a weak form of the invariant condition
$I(Y_0)=I(Y_1)$, because two conjugate points $y$ and $\overline{y}$ of the
curve $\mathcal L$ are the (complex conjugate) roots of $K(x,y)=0$ for some
$x\in[x_1,x_2]$. Hence, if the model admits a rational invariant $I(y)$ in the
sense of Definition~\ref{def:rat-inv}, then $I$ is also a weak
invariant. However, the above definition is less demanding, and it
turns out that every quadrant model admits a 
(non-trivial)
weak
invariant, which we now describe 
(in the non-singular case).

This invariant, traditionally denoted $w(y)$ 
(or even $\widetilde w (y)$) 
in the analytic approach
to quadrant problems~\cite{FIM-99,Ra-12}, is in addition injective in  $\mathcal
G_\mathcal L$. In analytic terms, this third condition makes it a \emm
conformal gluing function, for the domain $\mathcal G_\mathcal
L$. Explicit expressions of conformal gluing functions are known in a
number of cases (when the domain is an ellipse, a polygon, etc.).  In
our case the bounding curve $\mathcal L$ is a quartic
curve~\cite[Thm.~5.3.3 (i)]{FIM-99}, and  $w$ can be expressed in
terms of Weierstrass' elliptic
functions~(see~\cite[Sec.~5.5.2.1]{FIM-99} or~\cite[Thm.~6]{Ra-12}): 
\begin{equation}
\label{eq:expression_gluing}
     w(y;t)\equiv
     w(y)=\wp_{1,3}\Big(-\frac{\omega_1+\omega_2}{2}+\wp_{1,2}^{-1}(f(y))\Big), 
\end{equation}
where the various ingredients 
of this expression
are as follows. First, $f(y)$ is a
simple rational function of $y$ whose
coefficients are algebraic functions of  $t$:
$$
     f(y) = \left\{\begin{array}{ll}
     \displaystyle \frac{\widetilde d''(y_4)}{6}+\frac{\widetilde d'(y_4)}{y-y_4} & \text{if } y_4\neq \infty,\medskip\\
     \displaystyle\frac{\widetilde d''(0)}{6}+\frac{\widetilde d'''(0)y}{6}& \text{if } y_4=\infty,
     \end{array}\right.
$$
where the $y_\ell$'s are the branch points of the functions $X_{0,1}$,
and $\widetilde d(y)$ is the counterpart of the discriminant $ d(x)$ for the
variable  $y$ (so that $\widetilde d(y_4)=0$). 
Then, $\wp_{1,2}$ (resp.~$\wp_{1,3}$) is Weierstrass' elliptic function
with  periods $(\omega_1,\omega_2)$ (resp.~$(\omega_1,\omega_3)$),
where
\begin{equation}
\label{eq:expression_periods}
     \omega_1 = i\int_{x_1}^{x_2} \frac{\text{d} x}{\sqrt{- d(x)}},\quad
     \omega_2 = \int_{x_2}^{x_3} \frac{\text{d} x}{\sqrt{ d(x)}},\quad
     \omega_3 = \int_{Y(x_1)}^{x_1} \frac{\text{d} x}{\sqrt{ d(x)}}.
\end{equation}
Note that $\omega_1\in i\mathbb R_+$
and $\omega_2,\omega_3\in \mathbb R_+$. 

It is known that the function $w(y)$ given by~\eqref{eq:expression_gluing} 
is meromorphic in $\mathcal G_\mathcal L$, with a unique pole at
$y_2$ (which lies indeed in $\mathcal G_\mathcal L$).
It is an algebraic function of $y$ and $t$ for the
23 models with a finite group, see~\cite[Thm.~2 and Thm.~3]{Ra-12}. It
is even rational unless $\cS$ is one of 
the 4 algebraic models. 
 It is then a rational invariant, in
the sense of Definition~\ref{def:rat-inv}. 
In the infinite group case, $w(y)$ is not algebraic, nor even
D-finite w.r.t.\ to $y$, see~\cite[Thm.~2]{Ra-12}. However, we will
prove in Proposition~\ref{prop:w-DA} 
that it is differentially algebraic  in $y$ and $t$.

\subsection{The analytic  invariant lemma --- Application to quadrant walks}
We now come with an  analytic counterpart of Lemma~\ref{lem:inv-gessel}.
\begin{lem}\label{lem:inv-analytic}
  Let $\cS$ be a non-singular quadrant model and $I(y)$ a weak
  invariant for this model. If $I$ has no pole in $\mathcal G_\mathcal
  L$ (and, in the case of a non-bounded curve $\mathcal L$, if $I$ is bounded at $\infty$), it is independent of $y$. 
\end{lem}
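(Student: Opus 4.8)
The plan is to argue that a weak invariant $I(y)$ with no pole in $\mathcal G_{\mathcal L}$ is a bounded holomorphic function on $\mathcal G_{\mathcal L}$ that extends continuously to $\mathcal L$ and takes conjugate values at conjugate points of $\mathcal L$; from this symmetry one wants to deduce that $I$ is constant. First I would recall that $\mathcal L$ is symmetric about the real axis and that $\mathcal G_{\mathcal L}$ (bounded, or the unbounded region containing $-\infty$) is itself symmetric about the real axis. The idea is to produce out of $I$ a function that is holomorphic in the \emph{whole} plane (or on $\mathbb C \cup\{\infty\}$) and bounded, hence constant by Liouville.

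The key step is a Schwarz-reflection argument. Define, for $y$ in the exterior region $\mathbb C \setminus (\mathcal G_{\mathcal L}\cup \mathcal L)$, the function $\widehat I(y) := \overline{I(\overline y)}$. Wait — rather, since we want analyticity, I would instead reflect across the curve $\mathcal L$ using the conformal gluing function $w$: recall from the discussion preceding the lemma that $w$ is meromorphic and injective on $\mathcal G_{\mathcal L}$, with a single simple pole at $y_2$, and glues $\mathcal L$ to itself in the sense that $w(y)=w(\bar y)$ for $y\in\mathcal L$; in fact $w$ is a \emph{conformal gluing function}, so it maps $\mathcal G_{\mathcal L}$ bijectively onto $\mathbb C\setminus[\text{a slit}]$ (or onto $\widehat{\mathbb C}$ minus an arc), identifying $y$ with $\bar y$ on the boundary. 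Then the composed function $I\circ w^{-1}$ is defined on that slit domain, is holomorphic there (since $I$ has no pole in $\mathcal G_{\mathcal L}$, and $w^{-1}$ is holomorphic), takes equal limits from the two sides of the slit (because $I(y)=I(\bar y)$ on $\mathcal L$ while $w(y)=w(\bar y)$ identifies these two boundary points), and is bounded (since $I$ has finite limits on $\mathcal L$, hence is bounded on the compact $\mathcal G_{\mathcal L}\cup\mathcal L$ in the bounded case, and is bounded at $\infty$ by hypothesis in the unbounded case). A bounded holomorphic function on the complement of a slit which has matching boundary values on the two sides of the slit extends (by Morera across the slit) to an entire bounded function on $\widehat{\mathbb C}$, hence is constant; therefore $I$ is constant on $\mathcal G_{\mathcal L}$.

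I would then clean up the two regimes. In the bounded-curve case, $\mathcal G_{\mathcal L}\cup\mathcal L$ is compact, $I$ is continuous there and holomorphic inside, so it is automatically bounded, and the argument applies directly. In the unbounded case, $w$ has its pole at $y_2\in\mathcal G_{\mathcal L}$ and maps the unbounded $\mathcal G_{\mathcal L}$ to a neighborhood of a slit in $\widehat{\mathbb C}$; the hypothesis that $I$ is bounded at $\infty$ guarantees that $I\circ w^{-1}$ stays bounded near the image of $\infty$, so again we get an entire bounded (on $\widehat{\mathbb C}$) function. In either case the conclusion is that $I$ is independent of $y$.

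The main obstacle is making the gluing/reflection rigorous: one must be careful that $w^{-1}$ is single-valued and that the boundary identification $w(y)=w(\bar y)$ is exactly what lets $I\circ w^{-1}$ have continuous (hence, by Morera, analytic) continuation across the slit. Concretely, the delicate points are (i) that $w$ is genuinely injective on $\mathcal G_{\mathcal L}$ with boundary values gluing $y\leftrightarrow\bar y$ — this is the defining property of a conformal gluing function and is cited from~\cite{FIM-99,Ra-12}; (ii) that $I$ really extends continuously up to $\mathcal L$ with finite values (built into Definition~\ref{def:weak-inv}); and (iii) the removable-singularity step across the image slit, which is the classical fact that a function holomorphic off an analytic arc, continuous across it, is holomorphic across it. Everything else is bookkeeping about the two shapes of $\mathcal G_{\mathcal L}$, and the non-singularity hypothesis is used only to ensure the geometric picture of $\mathcal L$ and $\mathcal G_{\mathcal L}$ described before the lemma (so that $w$ and the branch-point structure of Lemma~\ref{lem:branch_points} are as stated).
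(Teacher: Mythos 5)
Your argument is correct and is essentially the same proof the paper relies on: the paper's entire ``proof'' is a citation to Lemmas~1 and~2 of Ch.~3 of the reference by Litvinchuk, where precisely this Carleman-type boundary condition $I(y)=I(\bar y)$ on $\mathcal L$ is handled by composing with the conformal gluing function $w$, continuing across the resulting slit, and invoking Liouville on $\widehat{\mathbb C}$. The delicate points are exactly the ones you flag yourself --- boundary regularity of $w^{-1}$ up to the slit, removability at the two endpoints of the slit and at $w(y_2)=\infty$ (and, in the unbounded case, at the image of $\infty\in\mathcal G_{\mathcal L}$, where your boundedness hypothesis is used) --- and all are standard.
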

 This is proved in~\cite[Ch.~3]{Li-00},
in Lemma~1 (resp.\ 2) for
 the bounded (resp.\ unbounded) case.

Let $\cS$ be a quadrant model that is decoupled, in the sense of
Definition~\ref{def:decoupled}. In particular,
$$
x(Y_0-Y_1)=G(Y_0)-G(Y_1)
$$
for some rational function $G$. 
Assume that $G$ has no pole on $\mathcal L$.
Then, by combining~\eqref{eqker:Y01} with the
analytic properties of $S$, we see that 
$$
I(y):=S(y)-G(y) 
$$
is a weak invariant in the sense of Definition~\ref{def:weak-inv}. Since $S$ is
analytic in $\mathcal G_\mathcal L$, the 
poles of $I(y)$ lying in $\mathcal G_\mathcal L$ %
must be poles of $G(y)$.
 Let us denote them $p_1, \ldots, p_\ell$,
and assume they are different from the pole $y_2$ of~$w$.
 Then there exists a
function of the form 
$$
r(y)= \sum_{i=1}^\ell  \sum_{e=1}^{m_i}
\frac{\alpha_{e,i}}{(w(y)-w(p_i))^e}
$$
such that 
$I(y)-r(y)$ has no pole in $\mathcal G_\mathcal L$ and is bounded there 
--- and is still a weak invariant. Applying  Lemma~\ref{lem:inv-analytic} tells us that this function is
independent of $y$. Let us examine two  examples.

\medskip
\noindent{\bf Example: Model $\boldsymbol {\#3}$ of 
Table~\ref{tab:decoupling_functions-infinite}.} 
This is a decoupled model, with
$G(y)=-y-1/y$. Hence $I(y)=S(y)+y+1/y= tyQ(0,y)+y+1/y$ is a weak invariant,
with a single pole in $\mathcal G_\mathcal L$, placed at $y=0$ and
having residue $1$ (the
curve $\mathcal G_\mathcal L$ is shown in Figure~\ref{fig:curves}, left). Thus
$I(y)$ differs from $\frac{w'(0)}{w(y)-w(0)}$ by a constant, and a
series expansion at $y=0$ gives
 \begin{equation}\label{eq:Q(x,0)_Kreweras_model_6}
     t\, Q(0,y) =
     \frac{1}{y}\left(\frac{w''(0)}{2w'(0)}+\frac{w'(0)}{w(y)-w(0)}-y-\frac{1}{y}\right). \end{equation} 
\qed 

This argument
extends to all models of 
Table~\ref{tab:decoupling_functions-infinite}.
The curve $\mathcal L$ is  bounded in each case.

\begin{cor}\label{cor:9models}
  For any of the $9$ models of 
Table~\ref{tab:decoupling_functions-infinite}, 
the   series $tQ(0,y)$ 
  admits a rational expression in terms of $y$, $w(y)$, $w(0)$,
  $w'(0)$, $w''(0)$, $w(-1)$, $w'(-1)$, $w''(-1)$, $w(\pm i)$, $w(\pm
  j)$, with $j= e^{2i \pi/3}$. 
\end{cor}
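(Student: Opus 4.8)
The plan is to run, for each of the nine models of Table~\ref{tab:decoupling_functions-infinite}, exactly the argument illustrated on Model~$\#3$, keeping track of the poles of the relevant decoupling function $G$ inside $\mathcal G_\mathcal L$. Concretely, fix such a model, with decoupling function $G(y)$ read off from the table; by Definition~\ref{def:decoupled} we have $x(Y_0-Y_1)=G(Y_0)-G(Y_1)$, so combining this with~\eqref{eqker:Y01} shows that
$$
I(y):=S(y)-G(y)=tyQ(0,y)\cdot(\text{possible extra rational factor})-G(y)
$$
is a weak invariant in the sense of Definition~\ref{def:weak-inv}, provided $G$ has no pole on the curve $\mathcal L$ itself (which has to be checked, but in all nine cases the poles of $G$ sit at $y=0$, $y=-1$, or at the primitive cube/fourth roots of unity, and one verifies these do not lie on $\mathcal L$, e.g.\ from the explicit description of $\mathcal L$ as a quartic curve symmetric about the real axis). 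Since $S$ is analytic in $\mathcal G_\mathcal L$ (cited from~\cite{Ra-12}), the only poles of $I$ in $\mathcal G_\mathcal L$ are among the poles of $G$; list them as $p_1,\dots,p_\ell$, each avoiding the pole $y_2$ of $w$.

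Next I would subtract off a rational function of $w(y)$ to cancel these poles. For each $p_i$ of multiplicity $m_i$ one builds, as in the passage preceding the corollary,
$$
r(y)=\sum_{i=1}^{\ell}\sum_{e=1}^{m_i}\frac{\alpha_{e,i}}{(w(y)-w(p_i))^e},
$$
choosing the constants $\alpha_{e,i}$ so that $I(y)-r(y)$ has no pole in $\mathcal G_\mathcal L$; concretely, the $\alpha_{e,i}$ are determined by matching the principal part of $I$ at $p_i$ against the Laurent expansion of $w(y)-w(p_i)$ at $p_i$, which introduces exactly the quantities $w(p_i)$, $w'(p_i)$, $w''(p_i)$ (since each $p_i$ has multiplicity at most $2$ across the table, with the worst cases like Model~$\#9$ where $G$ has a double pole at $0$). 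Because $w$ is injective on $\mathcal G_\mathcal L$ with its only pole at $y_2\ne p_i$, the function $r(y)$ is itself a weak invariant holomorphic away from $y_2$, so $I(y)-r(y)$ is a weak invariant with no pole in $\mathcal G_\mathcal L$ and bounded there. Lemma~\ref{lem:inv-analytic} (bounded case, since $\mathcal L$ is bounded in all nine models) then forces $I(y)-r(y)$ to be a constant $\kappa$. Finally, expanding the identity $I(y)=r(y)+\kappa$ near a convenient point — $y=0$ when $0$ is a pole of $G$, as in~\eqref{eq:Q(x,0)_Kreweras_model_6}, and otherwise at an ordinary point — determines $\kappa$ and yields the claimed rational expression of $tQ(0,y)$ in terms of $y$, $w(y)$, and the finitely many evaluations $w^{(k)}(0)$, $w^{(k)}(-1)$, $w(\pm i)$, $w(\pm j)$.

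The bookkeeping step — checking case by case which of $\{0,-1,\pm i,\pm j\}$ actually occur as poles of $G$ inside $\mathcal G_\mathcal L$, with which multiplicity, and that none lies on $\mathcal L$ or coincides with $y_2$ — is the real content, but it is routine given the explicit table; I expect the main genuine obstacle to be verifying the location of the poles relative to $\mathcal L$ and $y_2$. For the poles at $\pm i$ and $\pm j$: these arise only for a couple of the weighted-looking entries where $G$ involves factors like $1/(1+y)$ composed with shifts or $1/(y^2\pm\cdots)$ after simplification, and one must confirm via the explicit quartic defining $\mathcal L$ that they are interior points; here symmetry of $\mathcal L$ about $\mathbb R$ helps, since it pairs $i$ with $-i$ and $j$ with $\bar j$. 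The membership $p_i\ne y_2$ is automatic for generic $t\in(0,1/|\cS|)$ because $y_2$ is a branch point of $X_{0,1}$ and the $p_i$ are fixed algebraic points; any exceptional values of $t$ can be handled by continuity of both sides of the final formula. Once these verifications are in place, the corollary follows uniformly, and one records the precise list of needed evaluations from the nine resulting formulas.
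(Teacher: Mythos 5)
Your proposal follows the paper's own route: the paper proves this corollary simply by observing that the argument given for Model $\#3$ (form the weak invariant $I(y)=S(y)-G(y)$, subtract a rational function $r(y)$ of $w(y)$ cancelling the poles of $G$ inside $\mathcal G_\mathcal L$, apply Lemma~\ref{lem:inv-analytic} in the bounded case, and expand at a suitable point) extends to all nine models, the curve $\mathcal L$ being bounded in each case. So in substance you are reproducing the intended proof.

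One concrete correction to your bookkeeping, which you yourself single out as ``the real content'': the evaluations $w(\pm i)$ and $w(\pm j)$ do \emph{not} arise from poles of the decoupling functions. Inspection of Table~\ref{tab:decoupling_functions-infinite} shows that every $G$ listed there has its poles only at $y=0$ (of order at most $2$, e.g.\ Model $\#9$) or at $y=-1$ (Model $\#5$); none has a pole at $\pm i$ or at a cube root of unity, so no such poles need to be located relative to $\mathcal L$. These extra evaluation points enter at the last step, when one converts $S(y)=K(0,y)Q(0,y)$ back into $Q(0,y)$: for several of the nine models $K(0,y)$ equals $t(1+y^2)$ or $t(1+y+y^2)$, whose roots are $\pm i$ or the primitive cube roots of unity $j,\bar j$, and the vanishing of $S$ at those roots is what determines the remaining additive constant (and ensures regularity of the resulting expression for $Q(0,y)$ there) --- exactly the role played by the vanishing of $S(y)=tyQ(0,y)$ at $y=0$ in the Model $\#3$ computation leading to~\eqref{eq:Q(x,0)_Kreweras_model_6}. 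Your phrase ``otherwise at an ordinary point'' implicitly covers this, but the explanation you give for where $\pm i$ and $\pm j$ come from is wrong; with that fix the argument is the paper's argument.
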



\medskip
\noindent{\bf Example: the reversed Kreweras model.}
Recall that this model was left unsolved in Section~\ref{sec:extensions}. The
associated curve $\mathcal L$ is unbounded
(Figure~\ref{fig:curves}, right). 
This is again a decoupled model, 
and a 
decoupling function is $G(y)=-1/y$ (Table~\ref{tab:decoupling_functions-finite}).
Accordingly, $I(y)=S(y)+1/y= tQ(0,y)+1/y$ is a weak invariant,
with a single pole in $\mathcal G_\mathcal L$, 
placed at $y=0$ and having residue $1$. 
Further, one can derive from~\eqref{eq:func_spec}
that $I$ is bounded at $\infty$. 

Thus $I(y)$ differs from
$\frac{w'(0)}{w(y)-w(0)}$ by a constant, and a series expansion at
$y=0$ gives
$$
tQ(0,y)=-\frac 1  y + \frac{w'(0)}{w(y)-w(0)} + tQ(0,0) + \frac { w''(0)}{2w'(0)}.
$$
It remains to determine $Q(0,0)$. Using the case $i=0$ of~\eqref{eq:func_spec}
and the $x/y$ symmetry, we find that
$$
xY_0
= -\frac 1  x + \frac{w'(0)}{w(x)-w(0)}-\frac 1  {Y_0}+\frac{w'(0)}{w(Y_0)-w(0)} 
 + tQ(0,0) + \frac { w''(0)}{w'(0)}.
$$
Specializing for instance at $x=1$ gives an expression of $tQ(0,0)$.
In order to prove the algebraicity of $Q(0,y)$ (and hence that of $Q(x,y)$
by~\eqref{eq:functional_equation}), it now suffices to prove that $w(y)$
is algebraic. This can be done by applying
Lemma~\ref{lem:inv-analytic} to the rational invariant 
$I_2(y)={t}{y^2}-{y}-t/y$. This gives for $w(y)$ an equation with one
catalytic variable, which can be solved in a systematic way using~\cite{mbm-jehanne}. 
\qed

\section{Differential algebraicity}
\label{sec:DA}
As recalled in the introduction,  quadrant walks have a D-finite \gf\ if and only if the
associated group is finite
--- we can now say, if and only if they admit a rational invariant
(Proposition~\ref{prop:finite-invariant}).  
Still, one outcome of our analytic invariant approach is
that \emm non-linear, differential equations hold for a number of models
with an infinite group.

\begin{thm}
\label{thm:DA}
For any of the $9$  models 
of 
Table~\ref{tab:decoupling_functions-infinite}, 
the generating function $Q(x,y;t)$ is
\emm differentially algebraic, (or \emm D-algebraic,) in $x,y,t$. By this, we mean that it
satisfies \emm three, (non-linear) differential equations with coefficients in $\qs$: one in $x$,
one in $y$ and one in $t$. 
\end{thm}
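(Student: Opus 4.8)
The plan is to bootstrap from the integral-free expressions obtained in Corollary~\ref{cor:9models}. For each of the nine models of Table~\ref{tab:decoupling_functions-infinite}, that corollary expresses $tQ(0,y)$ as a rational function of $y$, $w(y)$, and finitely many constants $w^{(k)}(0)$, $w^{(k)}(-1)$, $w(\pm i)$, $w(\pm j)$. Since the set of D-algebraic functions (in a given variable, with coefficients in a differential field) forms a ring closed under the operations we need --- sums, products, reciprocals, composition, and differentiation --- it suffices to prove that $w(y)\equiv w(y;t)$ is itself D-algebraic in $y$ and in $t$, and that the finitely many series in $t$ appearing as constants (values and derivatives of $w$ at fixed points, and the auxiliary unknowns $Q(0,0)$, etc.) are D-algebraic in $t$. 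Granting this, $Q(0,y)$ is D-algebraic in $y$ and in $t$; by the $x/y$ symmetry of the models in the table (or an analogous direct argument via the second decoupling relation), $Q(x,0)$ is D-algebraic in $x$ and in $t$; and then the basic functional equation~\eqref{eq:functional_equation}, which expresses $K(x,y)Q(x,y)$ as $R(x)+S(y)-R(0)-xy$ with $K$ a polynomial, transfers D-algebraicity to $Q(x,y)$ in each of the three variables separately.

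First I would establish that $w(y;t)$ is D-algebraic in $y$. From the explicit formula~\eqref{eq:expression_gluing}, $w(y)=\wp_{1,3}\bigl(-\tfrac{\omega_1+\omega_2}{2}+\wp_{1,2}^{-1}(f(y))\bigr)$, where $f$ is rational in $y$ with algebraic-in-$t$ coefficients. Weierstrass functions satisfy $(\wp')^2=4\wp^3-g_2\wp-g_3$, so $\wp_{1,2}$ and $\wp_{1,3}$ are D-algebraic with respect to their argument; the addition-theorem composition $\wp_{1,3}(\xi+c)$ is a rational function of $\wp_{1,3}(\xi),\wp_{1,3}'(\xi)$ and of the constants $\wp_{1,3}(c),\wp_{1,3}'(c)$ (here $c=-\tfrac{\omega_1+\omega_2}{2}$); and $\wp_{1,2}^{-1}\circ f$ is D-algebraic in $y$ since $f$ is rational and $\wp_{1,2}^{-1}$ satisfies the first-order ODE inverse to the one above. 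Chaining these closure properties yields a polynomial differential equation for $w$ in $y$, with coefficients that are algebraic functions of $t$ (hence can be cleared to polynomial coefficients). In particular $w(y)$ is D-algebraic in $y$, which already gives the $y$-part, and evaluating a D-algebraic-in-$y$ identity at a fixed point $y=0,-1,\pm i,\pm j$ together with differentiating in $y$ first shows that each constant $w^{(k)}(y_0;t)$ satisfies a relation in $t$ driven by the $t$-dependence of $w$.

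The harder part, and the one I expect to be the main obstacle, is D-algebraicity in $t$ --- both of $w(y;t)$ for fixed $y$ and of the elliptic periods $\omega_1,\omega_2,\omega_3$ (equivalently of the invariants $g_2(t),g_3(t)$ of the two $\wp$-functions) given by the complete elliptic integrals~\eqref{eq:expression_periods}. The branch points $x_\ell(t),y_\ell(t)$ are algebraic functions of $t$, so $g_2,g_3$ are periods of an algebraic family of elliptic curves; such periods satisfy the Picard--Fuchs equation, which is a \emph{linear} (hence a fortiori polynomial) ODE in $t$ --- so $\omega_1,\omega_2,\omega_3$ are D-finite, thus D-algebraic, in $t$. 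One then has to check that the quasi-period quantities and the point $\wp_{1,2}^{-1}(f(y))$ depend D-algebraically on $t$ as well (Legendre's relation controls the quasi-periods; the implicit relation $\wp_{1,2}(\cdot)=f(y)$ differentiated in $t$ gives a first-order ODE once $g_2,g_3$ are known to be D-algebraic in $t$), and feed everything through the same closure-under-$+,\times,1/\cdot,\circ,\partial$ toolkit to conclude that $w(y;t)$, and then $Q(0,y;t)$, satisfy a polynomial ODE in $t$. The delicate points are (i) justifying that all these manipulations stay within the field of D-algebraic functions uniformly in the two "spectator" variables, and (ii) handling the model-dependent normalizations (the constant $tQ(0,0)$ for reversed Kreweras, the shifts at $y_4=\infty$, etc.) --- but none of these is more than a bookkeeping refinement of the argument above, so I would state the closure lemma once and apply it mechanically to each of the nine models.
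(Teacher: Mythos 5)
Your overall architecture is the same as the paper's: reduce Theorem~\ref{thm:DA} to Corollary~\ref{cor:9models} plus the D-algebraicity of $w(y;t)$ in $y$ and $t$ (this is exactly Proposition~\ref{prop:w-DA}), then conclude by closure of D-algebraic functions under sums, products, reciprocals, composition and differentiation, and transfer to $Q(x,y)$ through the kernel equation. The $y$-part of your argument (Weierstrass ODE, addition theorem, composition with the rational $f$ and with $\wp_{1,2}^{-1}$) is the paper's first step and is fine.

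The gap is in the $t$-part. The Picard--Fuchs argument gives you that the periods $\omega_1(t),\omega_2(t),\omega_3(t)$ of~\eqref{eq:expression_periods} are D-finite in $t$; it does \emph{not} give you, and is not ``equivalent to'', the D-algebraicity in $t$ of the invariants $g_2,g_3$ of the lattice generated by $(\omega_1,\omega_3)$, which is the lattice of the outer function $\wp_{1,3}$ in~\eqref{eq:expression_gluing}. For the lattice $(\omega_1,\omega_2)$ there is no issue: its invariants are algebraic in $t$, being read off the quartic $d$. But for an infinite-group model $\omega_3/\omega_2$ is irrational, so $(\omega_1,\omega_3)$ is not commensurable with the period lattice of the algebraic family, and $g_2(\omega_1,\omega_3)=\omega_1^{-4}\,c\,E_4(\omega_3/\omega_1)$ (similarly $g_3$ with $E_6$) is a genuinely transcendental function of $t$ that no linear ODE coming from the geometry of the family will produce. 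What rescues the argument --- and what the paper's proof of Proposition~\ref{prop:w-DA} explicitly invokes --- is that the Eisenstein series satisfy Ramanujan's non-linear differential system (Chazy-type equations), hence are D-algebraic in $\tau$; composing with the D-algebraic function $\tau(t)=\omega_3(t)/\omega_1(t)$, a quotient of D-finite functions, then yields D-algebraicity of $g_2,g_3$ in $t$. Your Legendre-relation remark controls the quasi-periods but not these invariants. Once this modular-form input is added, the rest of your plan (the bookkeeping for the constants $w^{(k)}(y_0;t)$ and for $Q(0,0)$, and the transfer from $Q(0,y)$ to $Q(x,0)$ and $Q(x,y)$ via the functional equation) goes through as you describe.
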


The proof builds on Corollary~\ref{cor:9models} and on the following
result, which holds for any non-singular model.

\begin{prop}\label{prop:w-DA}
  The conformal gluing function $w(y;t)$ defined by~{\rm\eqref{eq:expression_gluing}} is
   D-algebraic in $y$ and~$t$.
\end{prop}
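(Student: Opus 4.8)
The plan is to establish D-algebraicity of $w(y;t)$ in two independent directions, $y$ and $t$, exploiting the explicit formula~\eqref{eq:expression_gluing}.

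\textbf{D-algebraicity in $y$.} Fix $t$. The expression $w(y)=\wp_{1,3}\bigl(-(\om_1+\om_2)/2+\wp_{1,2}^{-1}(f(y))\bigr)$ is built from three elementary bricks, and I would argue that D-algebraicity in $y$ is preserved along each of them. First, $f(y)$ is a rational function of $y$ (with coefficients algebraic in $t$), hence D-algebraic. Second, $\wp_{1,2}^{-1}$ composed with $f$: writing $v=\wp_{1,2}^{-1}(f(y))$ we have $f(y)=\wp_{1,2}(v)$, so differentiating, $f'(y)=\wp_{1,2}'(v)\, v'$, and since $\wp_{1,2}'^2=4\wp_{1,2}^3-g_2\wp_{1,2}-g_3=4f^3-g_2 f-g_3$ is a polynomial in $f(y)$, the derivative $v'$ is an algebraic function of $y$; thus $v=v(y)$ satisfies a first-order algebraic ODE, in particular it is D-algebraic. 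Third, the outer $\wp_{1,3}$: if $w=\wp_{1,3}(\text{const}+v(y))$ then $w'=\wp_{1,3}'(\cdots)\,v'$ and $\wp_{1,3}'^2=4w^3-\widetilde g_2 w-\widetilde g_3$, so $w'^2=(4w^3-\widetilde g_2 w-\widetilde g_3)\,v'^2$ with $v'$ algebraic in $y$. This is a polynomial relation between $w$, $w'$ and $y$ (the elliptic invariants $\widetilde g_2,\widetilde g_3$ are constants depending only on $t$), so $w(y;t)$ satisfies a first-order algebraic differential equation in $y$. One must check the coefficients can be taken in $\qs(t)$ rather than merely in an algebraic extension: this follows because $g_2,g_3,\widetilde g_2,\widetilde g_3$ and the branch points $x_\ell, y_\ell$ are all algebraic over $\qs(t)$, and one can clear these algebraic quantities by taking a resultant, leaving a (higher-order, or higher-degree) differential polynomial with coefficients in $\qs(t)$, hence in $\qs$ after clearing $t$-denominators.

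\textbf{D-algebraicity in $t$.} This is the delicate part. Here the difficulty is that all the ingredients — the periods $\om_1,\om_2,\om_3$ defined by the elliptic integrals~\eqref{eq:expression_periods}, the branch points, the modular quantities $g_2, g_3$ — now vary with $t$, and $y$ is held fixed. The standard tool is that the periods of an elliptic curve, as functions of a rational parameter, satisfy a Picard--Fuchs linear ODE (a second-order Fuchsian equation) with coefficients in $\qs(t)$; hence $\om_1,\om_2$ (and $\om_3$, which is a period relative to the same or a closely related curve) are D-finite, a fortiori D-algebraic, in $t$. The discriminant $d(x)$ is a polynomial in $x$ whose coefficients are polynomials in $t$, so the branch points $x_\ell(t)$ are algebraic in $t$; similarly $y_\ell(t)$ and $f(y;t)$ (for fixed $y$) are algebraic in $t$. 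Then $g_2(t), g_3(t)$, being symmetric rational expressions of $\om_1,\om_2$ through the Eisenstein series, or equivalently algebraic in the branch points, are D-algebraic in $t$. Composing: $\wp_{1,2}^{-1}(f(y);t)$ as a function of $t$ is handled as above (derivative is algebraic in the D-algebraic data), and finally $w$ as a function of $t$ is $\wp_{1,3}$ evaluated at an argument that is a D-algebraic function of $t$, with D-algebraic elliptic invariants; the addition formula / differential equation for $\wp$ then yields a polynomial differential relation for $w$ in $t$. The clean way to organize all of this is to note that $w(y;t)$ is a function on (a cover of) the total space of the family of elliptic curves over the $t$-line, and that the field of D-algebraic functions is closed under the field operations, differentiation, composition with algebraic functions, and solving algebraic ODEs — so it suffices to exhibit $w$ as lying in the differential field generated over $\qs(y,t)$ by the periods, which are D-finite by Picard--Fuchs.

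\textbf{Main obstacle.} The real work is the $t$-direction: making rigorous that the period functions $\om_i(t)$ from~\eqref{eq:expression_periods} satisfy algebraic differential equations in $t$ with coefficients in $\qs(t)$ (Picard--Fuchs for the elliptic curve $Y^2=d(x)$, respectively the analogous curve for the $y$-variable, with attention to the third period $\om_3$ which mixes the two branch structures), and that this property propagates through the inversion $\wp_{1,2}^{-1}$ and the outer evaluation $\wp_{1,3}$ without losing D-algebraicity. The closure properties of D-algebraic functions under composition with algebraic functions and under the operations used are classical (see e.g.\ the references to Rubel and to Bostan--Chyzak--van Hoeij--Kauers--Pech on D-algebraic closure), so once the periods are pinned down as D-algebraic the rest is bookkeeping; one should also remark that because the two conditions are proved separately for the two variables, no uniformity in $(y,t)$ jointly is needed. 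Finally, that $w(y;t)$ is \emph{not} D-finite in $y$ (in the infinite-group case) is already recorded in~\cite[Thm.~2]{Ra-12}, so the proposition genuinely strengthens what was known only in the sense of passing from "not D-finite" to "still D-algebraic".
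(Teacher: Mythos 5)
Your overall architecture (decompose $w$ into the bricks $f$, $\wp_{1,2}^{-1}$, $\wp_{1,3}$, and push D-algebraicity through by closure properties) matches the spirit of the paper's sketch, and your treatment of the $y$-direction is essentially sound. But there is a genuine gap at the single point where the real difficulty lives: the Weierstrass invariants $\widetilde g_2,\widetilde g_3$ of the lattice $(\om_1,\om_3)$ underlying the \emph{outer} function $\wp_{1,3}$. You justify their D-algebraicity in $t$ by saying they are ``rational expressions of $\om_1,\om_2$ through the Eisenstein series, or equivalently algebraic in the branch points''. Both claims are false for this lattice. The invariants that are algebraic in the branch points (hence in $t$) are those of the period lattice $(\om_1,\om_2)$ of the curve $Y^2=d(x)$, i.e.\ the lattice of $\wp_{1,2}$. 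The lattice $(\om_1,\om_3)$ is \emph{not} commensurable with it in the infinite-group case (commensurability is essentially equivalent to finiteness of the group), so $\widetilde g_2(t),\widetilde g_3(t)$ are values of the Eisenstein series $g_k(\om_1,\om_3)=\om_1^{-2k}\sum_{(i,j)\neq(0,0)}(i+j\om_3/\om_1)^{-2k}$, which are transcendental over $\qs(t)$ --- indeed this is precisely why $w$ fails to be algebraic or D-finite there. The missing ingredient, which is the heart of the paper's proof, is that the normalized Eisenstein series are D-algebraic in $\tau=\om_3/\om_1$ (via the Ramanujan/Chazy differential system for modular forms, cf.\ Zagier's notes), and that $\tau(t)$ is itself D-algebraic because the periods are D-finite; D-algebraicity of $\widetilde g_2,\widetilde g_3$ in $t$ then follows by composition. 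Without this step your $t$-direction argument does not close, since $\wp_{1,3}$ cannot be controlled without controlling its invariants.

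The same false premise infects your $y$-direction at the last step: you propose to reduce the coefficients to $\qs(t)$ by ``taking a resultant'' over the algebraic quantities $\widetilde g_2,\widetilde g_3$, but these are not algebraic over $\qs(t)$. This part is repairable without modular forms: the two unknown ``constants'' (in $y$) $\widetilde g_2,\widetilde g_3$ can be eliminated \emph{differentially}, e.g.\ by differentiating $(\wp')^2=4\wp^3-\widetilde g_2\wp-\widetilde g_3$ twice to reach $\wp'''=12\,\wp\,\wp'$, which removes them at the cost of raising the order. I would also flag, as a smaller point, that $\om_3$ is an incomplete elliptic integral between algebraic endpoints rather than a period, so invoking ``Picard--Fuchs'' for it requires the slightly stronger statement that such integrals satisfy inhomogeneous Picard--Fuchs equations (still D-finite); this is true but should be said.
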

{\bf Proof:} (sketched). There are three main steps in the proof:
  \begin{itemize}
  \item We first consider the Weierstrass function
    $\wp(\omega)$  as a function of $\omega$, but also as a
    function of its periods $\om_1$ and $\om_2$ or, alternatively, of
    the values  $g_2$ and $g_3$ (also called \emm invariants, in the
    elliptic terminology!) defined by
$$
g_{k}(\om_1,\om_2)= \sum_{(i,j) \in \zs^2\setminus\{ (0,0)\}}  \frac 1{(i \omega_1
  +j \omega_2)^{2k}}.
$$
Using some  differentiation formul\ae\ borrowed from~\cite{AS},  we prove that $\wp$ is DA in $\om$, $g_2$ and~$g_3$.
\item Then, we prove that when $\om_1$, $\om_2$ are either the functions
  $\om_1(t)$, $\om_2(t)$ given by~\eqref{eq:expression_periods}, or the functions
  $\om_1(t)$, $\om_3(t)$ (still given by~\eqref{eq:expression_periods}), then 
  the functions $g_2$ and $g_3$ are DA in $t$. This follows
  from their expression as modular forms~\cite{WW-62}, and from the fact that
  modular forms satisfy differential equations~\cite{zagier-notes}.
\item We conclude using closure properties of D-algebraic functions. $\Box$
  \end{itemize}

\section{Further results and final comments}
Tutte's invariants offer a  new approach  of
quadrant walks, and we are faced with many open problems, mainly related
to the notion of decoupling functions. 
\begin{itemize}
     \item We still need to determine the exact applicability of our
       approach: which models admit a decoupling function? For a
       model with finite group, 
the existence
       of a decoupling function implies algebraicity. Hence we have
       found all decoupling functions for walks with finite group. But
       what about the 51 non-singular models with an
       infinite group? We have found 9 with a decoupling function and
       (although  not explained  in this abstract) we
       also know   that   36  have
       no decoupling function (this includes for instance the
       model with jumps $\downarrow, \swarrow,\leftarrow
       ,\nearrow$). This leaves 6 more models which might have
       decoupling functions:
\begin{center}
 \begin{tikzpicture}[scale=.5] 
    \draw[->] (0,0) -- (-1,0);
       \draw[->] (0,0) -- (0,1);
    \draw[->] (0,0) -- (1,-1) ;
    \draw[->] (0,0) -- (1,0) ;
        \draw[->] (0,0) -- (0,-1);
  \end{tikzpicture}\hspace{4mm}
 \begin{tikzpicture}[scale=.5] 
    \draw[->] (0,0) -- (-1,0);
        \draw[->] (0,0) -- (0,1) ;
    \draw[->] (0,0) -- (-1,-1) ;
    \draw[->] (0,0) -- (1,0) ;
        \draw[->] (0,0) -- (0,-1);
  \end{tikzpicture}\hspace{4mm} \begin{tikzpicture}[scale=.5] 
        \draw[->] (0,0) -- (-1,1);
    \draw[->] (0,0) -- (0,1);
    \draw[->] (0,0) -- (-1,-1) ;
    \draw[->] (0,0) -- (1,0) ;
    \draw[->] (0,0) -- (1,-1);
       \end{tikzpicture}\hspace{4mm} \begin{tikzpicture}[scale=.5] 
    \draw[->] (0,0) -- (-1,0);
    \draw[->] (0,0) -- (-1,1);
    \draw[->] (0,0) -- (0,1);
    \draw[->] (0,0) -- (-1,-1) ;
    \draw[->] (0,0) -- (1,0) ;
    \draw[->] (0,0) -- (1,-1);
       \end{tikzpicture}\hspace{4mm} \begin{tikzpicture}[scale=.5] 
    \draw[->] (0,0) -- (-1,0);
    \draw[->] (0,0) -- (-1,1);
    \draw[->] (0,0) -- (0,1);
    \draw[->] (0,0) -- (-1,-1) ;
    \draw[->] (0,0) -- (1,0) ;
    \draw[->] (0,0) -- (1,-1);
    \draw[->] (0,0) -- (0,-1);
   \end{tikzpicture}\hspace{4mm} \begin{tikzpicture}[scale=.5] 
        \draw[->] (0,0) -- (0,1);
    \draw[->] (0,0) -- (-1,-1) ;
    \draw[->] (0,0) -- (1,0) ;
    \draw[->] (0,0) -- (1,1);
     \end{tikzpicture}
\end{center}
     \item We have not considered 
 the 5 singular
       models. But they do admit weak invariants (though not given
       by~\eqref{eq:expression_gluing}) and this raises the question
       of finding decoupling functions for them. 
     \item 
The known results
 on the nature of $Q(x,y)$ can be summarized as follows:
    \begin{center}
      \begin{tabular}{|l|c|c|}
\hline
& Existence of decoupling functions & No decoupling function\\
\hline
Finite group & Algebraic & D-finite 
transcendental\\
\hline
Infinite group  & Differentially algebraic & ?\\
\hline
     \end{tabular}
     \end{center}
    Could it be that  infinite group models without decoupling
    function have a non-differentially algebraic \gf?
     \item 
Can we obtain explicit differential equations in the D-algebraic
cases? One possible approach would be to mimic Tutte's solution
of~\eqref{eq:Tutte}: he first found  a non-linear differential equation
valid for infinitely many values of $q$ (for which $G(1,0)$ is in fact
algebraic), and  then concluded
by a continuity argument.
In our
context, this would mean introducing weights so as to obtain a family
of algebraic
models converging to a D-algebraic one.

     \item Another aspect, to be explored in the long version of this
       extended abstract, is the influence of the starting point on
       the nature of the \gf. As examples, Gessel's and Kreweras'
       models starting from $(0,1)$ do not admit decoupling functions,
       which implies their  transcendance. 
\end{itemize}     

\medskip

\noindent {\bf Acknowledgements.} We  thank Irina Kurkova
for interesting discussions. OB is supported by the NSF grant DMS-1400859. KR is partially supported by the
project MADACA from the R\'egion Centre and by the ANR Graal Grant
(ANR-14-CE25-0014).  
We are grateful to one of our FPSAC referees who spotted mistakes in our tables.


\bibliographystyle{alpha}
\bibliography{BeBMRa}
\label{sec:biblio}
\appendix

\section{Rational invariants for finite groups}
\label{app:inv}
Recall from~\cite{BMM-10} that there are 23 models with a finite group.  First come 16
models with a vertical symmetry, for which
$
K(x,y)= (1+x^2)\widetilde a(y) +x \widetilde b(y).
$
Hence for a pair $(x,y)$ that cancels the kernel, we have
$
x+\frac{1}{x}= -\frac{\widetilde b(y)}{\widetilde a(y)}
$.
 A possible choice of invariants is thus:
\vskip -3mm $$
I_1(x)=x+\frac{1}{x} , \qquad I_2(y)=-\frac{\widetilde b(y)}{\widetilde a(y)}.
$$
We are left with 7 models. We can restrict the discussion to 3 
since the invariants of two models differing by a symmetry of the square are
related. As these symmetries are generated by the 
reflection in the first diagonal and the reflection in a vertical, we
just need to consider these two cases.

\begin{lem}
\label{lem:invariant_invariants}
  Take  a model $\cS$ with kernel $K(x,y)$ and its diagonal reflection
  $\widetilde \cS$, with kernel
$
\widetilde K(x,y)= K(y,x).
$
Then $\widetilde \cS$ admits invariants if and only if $\cS$ does, and in this case
a possible choice is
$
\widetilde I_1= I_2$  and $\widetilde I_2= I_1.
$
A similar statement holds for the vertical reflection $\overline{\cS}$, with
  kernel
$
\overline{K}(x,y)= x^2 K(\bx,y).
$
A possible choice is in this case
$
\overline{I}_1(x)= I_1(\bx)$ and  $ \overline{I}_2= I_2.
$
\end{lem}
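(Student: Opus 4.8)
The guiding principle is the observation recorded after Lemma~\ref{lem:xy-inv}: a model with kernel $K$ admits invariants $(I_1,I_2)$ precisely when the rational function $I_1(x)-I_2(y)$ vanishes on the curve $K(x,y)=0$; equivalently, when $I_1(x)=I_2(Y_0(x))=I_2(Y_1(x))$ holds identically as Laurent series in $t$, where $Y_0,Y_1$ are the two $y$-roots of $K$. One direction of this equivalence is Lemma~\ref{lem:xy-inv}; for the converse it suffices to substitute $y=Y_0(x)$ and then $y=Y_1(x)$ in the curve identity, which gives $I_2(Y_0)=I_2(Y_1)=I_1(x)$, so that $I_2$ is a $y$-invariant and $I_1$ its associated $x$-invariant. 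The lemma will then follow from the fact that the diagonal and vertical reflections of the square permute the curve $K(x,y)=0$.

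For the diagonal reflection I would argue as follows. Write $X_0,X_1$ for the two $x$-roots of $K$. Since $\widetilde K(x,Y)=K(Y,x)$, the two roots of $\widetilde K(x,Y)=0$ in $Y$ are exactly $X_0(x)$ and $X_1(x)$; symmetrically, the two roots of $\widetilde K(X,y)=0$ in $X$ are the two $y$-roots of $K$, now viewed as functions of $y$. Hence, setting $\widetilde I_2:=I_1$, the requirement that $\widetilde I_2$ take equal values at the two $y$-roots of $\widetilde K$ reads $I_1(X_0)=I_1(X_1)$, which is valid because $I_1$ is the $x$-invariant of $\cS$ (Lemma~\ref{lem:xy-inv}); and setting $\widetilde I_1:=I_2$, the requirement that $\widetilde I_1$ take equal values at the two $x$-roots of $\widetilde K$ reads $I_2(Y_0)=I_2(Y_1)$, valid by hypothesis. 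Thus $(\widetilde I_1,\widetilde I_2)=(I_2,I_1)$ is a compatible pair of invariants for $\widetilde\cS$, proving the implication ``$\cS$ has invariants $\Rightarrow$ $\widetilde\cS$ has invariants''; applying it to $\widetilde\cS$, and using $\widetilde{\widetilde\cS}=\cS$, gives the reverse implication, hence the equivalence.

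The vertical reflection is handled the same way, once one checks that $\overline K(x,y)=x^2K(\bx,y)=\widetilde c(y)x^2+\widetilde b(y)x+\widetilde a(y)$ is again a genuine degree-two polynomial in $x$ and in $y$: the factor $x^2$ merely clears the pole at $x=0$ and creates no spurious component. Consequently, the $y$-roots of $\overline K$ are $Y_0(\bx),Y_1(\bx)$ and its $x$-roots are $1/X_0(y),1/X_1(y)$. Taking $\overline I_2:=I_2$, the condition $\overline I_2(Y_0(\bx))=\overline I_2(Y_1(\bx))$ becomes, after the substitution $z=\bx$, the identity $I_2(Y_0(z))=I_2(Y_1(z))$, true by hypothesis; taking $\overline I_1(x):=I_1(\bx)$, the condition $\overline I_1(1/X_0(y))=\overline I_1(1/X_1(y))$ unwinds to $I_1(X_0(y))=I_1(X_1(y))$, true by Lemma~\ref{lem:xy-inv}. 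Since $\overline{\overline\cS}=\cS$, this yields the equivalence in the vertical case. The non-triviality demanded by Definition~\ref{def:rat-inv} is transported for free: $I_1\notin\qs(t)$ because $I_1(X_0(y))=I_2(y)\notin\qs(t)$, and $I_1(\bx)\notin\qs(t)$ because $I_1\notin\qs(t)$.

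The only genuine work here is bookkeeping rather than mathematics: one must check, at the level of the Laurent-series branches (after choosing a common parametrization of $x$ making $Y_0,Y_1$ honest series), that the substitutions $(x,y)\mapsto(y,x)$ and $x\mapsto\bx$ really carry the branches of $K$ to those of $\widetilde K$ and $\overline K$, and that the functions $\widetilde I_j$ and $\overline I_j$ produced this way lie in $\qs(t,x)\setminus\qs(t)$ or $\qs(t,y)\setminus\qs(t)$, as Definition~\ref{def:rat-inv} requires. None of this presents any difficulty; the content of the statement is just that the symmetries of the square permute the curve $K(x,y)=0$, combined with the characterization of invariants recalled at the start.
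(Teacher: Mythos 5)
Your proof is correct and follows essentially the argument the paper intends (the lemma is stated without proof in the appendix): the reflections of the square permute the branches of the curve $K(x,y)=0$, so the pair $(I_1,I_2)$ is simply transported, with the equivalence following from the fact that both reflections are involutions. Your identifications of the branches ($\widetilde Y_i=X_i$, $\widetilde X_i=Y_i$, $\overline Y_i(x)=Y_i(\bx)$, $\overline X_i=1/X_i$) and the check that non-triviality in the sense of Definition~\ref{def:rat-inv} is preserved are exactly the bookkeeping required.
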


We can now complete our list of rational invariants:
Table~\ref{tab:ratinv} gives for the 7 remaining models   a
pair $(I_1, I_2)$  satisfying the conditions of
Lemma~\ref{lem:xy-inv}. Finally, Table~\ref{tab:ratinv-weighted} gives invariants for the
four weighted models shown in Table~\ref{tab:decoupling_functions-finite}, right.

\begin{table}[htb]
\small
 \begin{tabular}{|l|c|c|c|c|c|c|}
\hline
&    \begin{tikzpicture}[scale=.4] 
    \draw[->] (0,0) -- (1,-1);
    \draw[->] (0,0) -- (-1,0);
    \draw[->] (0,0) -- (0,1);
  \end{tikzpicture} 
&\begin{tikzpicture}[scale=.4] 
    \draw[->] (0,0) -- (1,1);
    \draw[->] (0,0) -- (-1,0);
    \draw[->] (0,0) -- (0,-1);
  \end{tikzpicture}
&\begin{tikzpicture}[scale=.4] 
    \draw[->] (0,0) -- (-1,-1);
    \draw[->] (0,0) -- (1,0);
    \draw[->] (0,0) -- (0,1);
  \end{tikzpicture}
&\begin{tikzpicture}[scale=.4] 
    \draw[->] (0,0) -- (1,-1);
    \draw[->] (0,0) -- (-1,1);
    \draw[->] (0,0) -- (-1,0);
    \draw[->] (0,0) -- (1,0);
    \draw[->] (0,0) -- (0,-1);
    \draw[->] (0,0) -- (0,1);
  \end{tikzpicture}
  &\begin{tikzpicture}[scale=.4] %
    \draw[->] (0,0) -- (-1,-1);
    \draw[->] (0,0) -- (1,1);
    \draw[->] (0,0) -- (-1,0);
    \draw[->] (0,0) -- (1,0);
    \draw[->] (0,0) -- (0,-1);
    \draw[->] (0,0) -- (0,1);
  \end{tikzpicture}
   &\begin{tikzpicture}[scale=.4] %
    \draw[->] (0,0) -- (-1,1);
    \draw[->] (0,0) -- (1,-1);
    \draw[->] (0,0) -- (-1,0);
    \draw[->] (0,0) -- (1,0);
  \end{tikzpicture}\ \ \ 
  \begin{tikzpicture}[scale=.4] %
    \draw[->] (0,0) -- (1,1);
    \draw[->] (0,0) -- (-1,-1);
    \draw[->] (0,0) -- (-1,0);
    \draw[->] (0,0) -- (1,0);
  \end{tikzpicture}
\\
\hline
 $I_1$& $\frac{t}{x^2}-\frac{1}{x}-tx$ & $\frac{t}{x^2}-\frac{1}{x}-tx$ &
 $tx^2-x-\frac{t}{x}$
&$ tx-\frac{t}{x} + \frac{1+2t}{1+\frac{1}{x}}$ &$\frac{t}{x}-tx-  \frac{1+2t}{1+x}$ &$x+\frac{1}{x}-tx^2-\frac{t}{x^2}$
\\ & & & & & &
\\$I_2$& $ty^2-y-\frac{t}{y}$ & $\frac{t}{y^2}-\frac{1}{y}-ty$&$ty^2-y-\frac{t}{y}$&$ \frac{t}{y}-ty -
\frac{1+2t}{1+y}$&$\frac{t}{y}-ty -
\frac{1+2t}{1+y}$  & $y+\frac{1}{y}-ty^2-\frac{t}{y^2}$\\
\hline
  \end{tabular}
  \caption{Rational invariants for models with a finite group: models
    with no vertical symmetry.}
  \label{tab:ratinv}
\end{table}
\normalsize

\begin{table}[htb]
\small
\begin{tabular}[h!]{|c|c|c|c|}
\hline
&  \begin{tikzpicture}[scale=.3] 
    \draw[->] (0,0) -- (-1,0) node[left] {$\scriptstyle 1$};
    \draw[->] (0,0) -- (-1,-1) node[left] {$\scriptstyle 1$};
    \draw[->] (0,0) -- (0,-1) node[below] {$\scriptstyle \lambda$};
    \draw[->] (0,0) -- (1,-1) node[right] {$\scriptstyle 1$};
    \draw[->] (0,0) -- (1,0) node[right] {$\scriptstyle 2$};
    \draw[->] (0,0) -- (1,1) node[right] {$\scriptstyle 1$};
  \end{tikzpicture}&\begin{tikzpicture}[scale=.3] 
    \draw[->] (0,0) -- (-1,0) node[left] {$\scriptstyle 1$};
    \draw[->] (0,0) -- (-1,1) node[left] {$\scriptstyle 1$};
    \draw[->] (0,0) -- (0,1) node[above] {$\scriptstyle 2$};
    \draw[->] (0,0) -- (1,1) node[right] {$\scriptstyle 1$};
    \draw[->] (0,0) -- (1,0) node[right] {$\scriptstyle 2$};
    \draw[->] (0,0) -- (1,-1) node[right] {$\scriptstyle 1$};
    \draw[->] (0,0) -- (0,-1) node[below] {$\scriptstyle 1$};
  \end{tikzpicture} & \begin{tikzpicture}[scale=.3] 
    \draw[->] (0,0) -- (-1,0) node[left] {$\scriptstyle 2$};
    \draw[->] (0,0) -- (-1,1) node[left] {$\scriptstyle 1$};
    \draw[->] (0,0) -- (0,1) node[above] {$\scriptstyle 1$};
    \draw[->] (0,0) -- (-1,-1) node[left] {$\scriptstyle 1$};
    \draw[->] (0,0) -- (1,0) node[right] {$\scriptstyle 1$};
    \draw[->] (0,0) -- (1,-1) node[right] {$\scriptstyle 1$};
    \draw[->] (0,0) -- (0,-1) node[below] {$\scriptstyle 2$};
  \end{tikzpicture}\ \ 
    \begin{tikzpicture}[scale=.3] 
    \draw[->] (0,0) -- (-1,0) node[left] {$\scriptstyle 2$};
    \draw[->] (0,0) -- (-1,1) node[left] {$\scriptstyle 1$};
    \draw[->] (0,0) -- (0,1) node[above] {$\scriptstyle 2$};
    \draw[->] (0,0) -- (1,1) node[right] {$\scriptstyle 1$};
    \draw[->] (0,0) -- (1,0) node[right] {$\scriptstyle 1$};
    \draw[->] (0,0) -- (0,-1) node[below] {$\scriptstyle 1$};
        \draw[->] (0,0) -- (-1,-1) node[left] {$\scriptstyle 1$};
  \end{tikzpicture} 
\\
\hline
$I_1$ &$\frac{t}{x^2}-\frac{1}{x} -x(1+\lambda t)$
&$\frac{{t}^{2}}{x^{2}}- \frac{( 1+2t) t}{x}-{ { ( 3t+1
 ) t}x}-{\frac { ( 1+3t )  ( 4t+1 ) 
}{x+1}}+{\frac { ( 3t+1 ) ^{2}}{ ( x+1 ) ^{2}}}$ &see Lemma \ref{lem:invariant_invariants} 
\\ & & & and the
\\
$I_2$ & $t^2y +\frac{1+\lambda t}{y+1}- \left(\frac{1+\lambda
    t}{y+1}\right)^2$&
$\frac{{t}^{2}}{y^{2}}- \frac{( 1+2t) t}{y}-{ { ( 3t+1
 ) t}y}-{\frac { ( 1+3t )  ( 4t+1 ) 
}{y+1}}+{\frac { ( 3t+1 ) ^{2}}{ ( y+1 ) ^{2}}}$ & previous example
\\
\hline
  \end{tabular}
  \caption{Rational invariants for weighted models.}
  \label{tab:ratinv-weighted}
\end{table}
\normalsize

\end{document}